\def\Div{{\rm div\,}}
\def\rot{{\rm rot}\,}
\def\T{\rm T}
\def\D{\rm D}
\def\R{\rm R}
\newcommand{\al}{\alpha}
\newcommand{\be}{\beta}
\newcommand{\ga}{\gamma}
\newcommand{\de}{\delta}
\newcommand{\eps}{\varepsilon}
\newcommand{\va}{\varphi}
\def\la{\lambda}
\def\La{\Lambda}
\def\si{\sigma}
\def\De{\Delta}
\def\nb{\nabla}
\def\eps{\varepsilon}
\def\Om{\Omega}
\def\pa{\partial}
\def\Oi{\int_{\Omega}}
\def\v{\tilde{v}}
\def\ra{\rightarrow}
\newcommand{\benn}{\begin{eqnarray*}}
\newcommand{\eenn}{\end{eqnarray*}}
\newcommand{\ben}{\begin{eqnarray}}
\newcommand{\een}{\end{eqnarray}}
\newcommand{\bal}{\begin{aligned}}
\newcommand{\eal}{\end{aligned}}
\theoremstyle{plain}
\newtheorem{lemma}{Lemma}
\newtheorem{theorem}{Theorem}
\newtheorem{condition}{Condition}
\newtheorem{remark}[lemma]{Remark}
\newtheorem{definition}[lemma]{Definition}
\def\dis{\displaystyle}
\def\\{\hfil\break}
\def\const{{\rm const}}
\def\D{{\Bbb D}}
\def\I{{\Bbb I}}
\def\R{{\Bbb R}}
\def\T{{\Bbb T}}
\def\Z{{\Bbb Z}}
\def\d{\tilde{d}}
\def\h{\tilde{h}}
\def\ci{\tilde{\chi}}
\title[Nonstationary NSE]{Nonstationary flow for the Navier-Stokes equations in a cylindrical pipe }
\author[J. Renc{\l}awowicz \& W. M.
Zaj\c{a}czkowski]{Joanna Renc{\l}awowicz \& Wojciech M.
Zaj\c{a}czkowski}
\address{Joanna Renc{\l}awowicz:  Institute of Mathematics, Polish Academy of
Sciences, \'{S}nia\-dec\-kich 8, 00-956 Warsaw, Poland, e-mail:
jr@impan.gov.pl}
\address{Wojciech M. Zaj\c{a}czkowski:  Institute of Mathematics, Polish Academy of
Sciences, \'{S}niadeckich 8, 00-956 Warsaw, Poland, e-mail:
wz@impan.gov.pl and Institute of Mathematics and Cryptology,
Military University of Technology, Kaliskiego 2, 00-908 Warsaw,
Poland}
\subjclass[2010]{Primary 35Q30; Secondary 76D03, 76D05}
\keywords{Navier-Stokes equation,  Neumann boundary-value problem,
Dirichlet boundary-value problem, regular solutions, large flux,
slip boundary conditions}
\thanks{Research supported by MNiSW grant no N N201 396937}
\date{\today}
\begin{document}

\begin{abstract}
In cylindrical domain, we consider the nonstationary flow with
prescribed inflow and outflow, modelled with Navier-Stokes
equations under the slip boundary conditions. Using smallness of
some derivatives of inflow function, external force and initial
velocity of the flow, but with no smallness restrictions on the
inflow, initial velocity neither force, we prove existence of
solutions in $W^{2,1}_2.$
\end{abstract}

\maketitle

\section{Introduction}

In the paper we study viscous incompressible fluid motion in a
finite cylinder with large inflow and outflow. The nonstationary
flow is described by Navier-Stokes equations and we impose the
boundary slip conditions, with friction between the flux and the
part of the boundary. Namely, we consider the following initial
boundary value problem.
\ben \label{NS} \bal %
&v_{t}+v\cdot\nabla v-\Div \T(v,p)=f\quad &{\rm in}\ \
 \Omega^T=\Omega\times(0,T),\\
&\Div v=0\quad &{\rm in}\ \ \Omega^T,\\
 &v\cdot\bar n=0\quad &{\rm
on}\  S_1^T ,\\
&\nu \bar n\cdot\D(v)\cdot\bar\tau_\alpha + \ga v \cdot
\bar\tau_{\al}=0,\ \ \alpha=1,2,\quad
 &{\rm on}\ \ S_1^T,\\
 &v\cdot\bar n=d\quad &{\rm
on}\  S_2^T ,\\
 & \bar n\cdot\D(v)\cdot\bar\tau_\alpha =0,\ \
\alpha=1,2,\quad
 &{\rm on}\ \ S_2^T,\\
&v\big|_{t=0}=v(0)\quad &{\rm in}\ \ \Omega, \eal \een \noindent
where $\Omega\subset\R^3$ is a cylindrical domain,
$S=\partial\Omega$, $v$ is the velocity of the fluid motion with
\mbox{$v(x,t)=(v_1(x,t),v_2(x,t),v_3(x,t))\in\R^3$} ,
$p=p(x,t)\in\R^1$ denotes the pressure,
$f=f(x,t)=(f_1(x,t),f_2(x,t),f_3(x,t))\in\R^3$ -- the external
force field, $x=(x_1, x_2, x_3)$ are the Cartesian coordinates,
$\bar n$ is the unit outward vector normal to the boundary $S$ and
$\bar\tau_\alpha$, $\alpha=1,2,$ are tangent vectors to $S$ and
$\cdot$ denotes the scalar product in $\R^3$.
 $\T(v,p)$ is the stress tensor of the form
$$
\T(v,p)=\nu\D(v)-p\I,
$$
where $\nu$ is the constant viscosity coefficient and  $\I$ is the
unit matrix. Next, $\ga >0$ is the slip coefficient and $\D(v)$
denotes the dilatation tensor of the form
$$
\D(v)=\{v_{i,x_j}+v_{j,x_i}\}_{i,j=1,2,3}.$$

We consider $\Omega\subset\R^3$ which is a~cylindrical type domain
parallel to the axis $x_3$ with arbitrary cross section. We split
$S$ into $S_1$ and $S_2$  so that $S = S_1 \cup S_2$ where $S_1$
is the part of the boundary which is parallel to the axis $x_3$
and $S_2$ is perpendicular to $x_3$.
Consequently, \benn S_1 & = & \{x\in\R^3:\varphi_0(x_1,x_2)=c_0,\ -a<x_3<a\}, \\
S_2(-a)& = & \{x\in\R^3:\varphi_0(x_1,x_2)<c_0,\ \ x_3 = -a \}, \\
S_2(a)& = & \{x\in\R^3:\varphi_0(x_1,x_2)<c_0,\ \ x_3 = a \} \eenn
where $a,c_0$ are positive given numbers and
$\varphi_0(x_1,x_2)=c_0$ describes a~sufficiently smooth closed
curve in the plane $x_3=\const.$

\begin{figure}[hbt]
\begin{center}
\epsfig{file=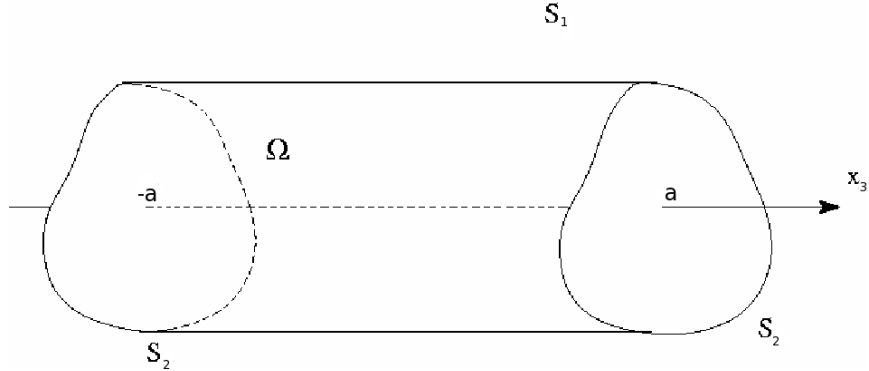,width=4.5 in}
\end{center}
\caption{Domain $\Omega.$}%
\label{om1}
\end{figure}

To describe inflow and outflow we define \ben \label{d0} \bal d_1
& =  -v\cdot \bar{n}|_{S_2(-a)} \\ d_2 & =  v \cdot
\bar{n}|_{S_2(a)} \eal \een with $d_i \ge 0, i=1,2.$ Using
$(\ref{NS})_{2,3}$ and (\ref{d0}) we conclude the following
compatibility condition \ben \label{d} \Phi \equiv \int_{S_2(-a)}
d_1dS_2 = \int_{S_2(a)} d_2 dS_2, \een where $\Phi$ is the flux.

The main goal of the paper is to show the long time existence of
regular solutions to problem (\ref{NS}) with arbitrary large
external force $f$, initial velocity $v(0)$ and inflow-outflow
$d.$ We can obtain the existence result by regularizing weak
solutions. In general, we follow the ideas from \cite{RZ1,Z1}.

Regularity of solutions for Navier-Stokes equations, even with no
flux, requires some smallness conditions. Many results of this
type were proved assuming smallness of initial velocity, some
restrictions on domain (so called thin domain, $\Om = \Om' \times
(0,\eps), \Om' \in \R^2$ with small $\eps$) or special structure
of solutions (so that the solution is close to 2-dimensional
solution). We mention here as well some results in case of
stationary or nonstationary flux, with the same type of boundary
condition that we assume: slip condition on the part of boundary,
involving or neglecting friction between the boundary and the
fluid. In \cite{M1}, the existence for large data is obtained
under some geometrical constraints for 2d model in steady and
evolutionary case. In \cite{M2}, steady Navier Stokes equations in
pipe-like domain are investigated and existence is shown for a
class of cylindrical symmetric solutions. In \cite{Z4}, the
problem of nonstationary flow in axially symmetric domain is
examined and the result concerns the existence for solutions close
to axially symmetric solutions and the inflow and outflow
sufficiently close to homogeneous flux.

We show the existence of regular solutions in a class of flows
with arbitrary large initial velocity $v(0)$, inflow $d_1$ and
external force $f.$ Instead, we use smallness of derivatives of
initial velocity and force in the direction along the $x_3$ axis
and smallness of some derivatives of the inflow.

To formulate the main result, we introduce some notation.

We will use Sobolev spaces
$$
W_p^{s,s/2}(Q^T),\quad Q\in\{\Omega,S\},\quad
s\in\Z_+\cup\{0\},\quad p\in[1,\infty],
$$
with the following norm for even $s$ \benn W_p^{s,s/2}(Q^T)= \{u:
\|u\|_{W_p^{s,s/2}(Q^T)}=\bigg(\sum_{|\alpha|+2a\le s}\int_{Q^T}
|D_x^\alpha\partial_t^a u|^p dxdt\bigg)^{1/p} \le \infty \},
\eenn where
$D_x^\alpha=\partial_{x_1}^{\alpha_1}\partial_{x_2}^{\alpha_2}
\partial_{x_3}^{\alpha_3}$, $\alpha=(\alpha_1,\alpha_2,\alpha_3)$,
$|\alpha|=\alpha_1+\alpha_2+\alpha_3$, $\alpha_i\in\Z_+\cup\{0\}$,
$i=1,2,3.$ %
In particular, $$ H^s(Q)=W_2^s(Q). $$ We will need also%
\benn V_2^0(\Om^T) = \{u: ||u||_{V^0_2(\Om^T)} = {\rm ess}
\sup_{t\in(0,T)}||u||_{L_2(\Om)} + \left(\int_0^T
||\nb u||_{L_2(\Om)}^2 dt \right)^{1/2} <\infty \}, \\
V_2^1(\Om^T) = \{u: ||u||_{V^1_2(\Om^T)} = {\rm ess}
\sup_{t\in(0,T)}||u||_{H^1(\Om)} + \left(\int_0^T ||\nb
u||_{H^1(\Om)}^2 dt \right)^{1/2} <\infty \}. \eenn

Let $A$ be a bound for the $V_0^2$ norm of $v$ -- a weak solution
of (\ref{NS}), i.e. \benn \|v\|^2_{V^0_2(\Om^t)} + \ga
\sum_{\al=1}^2 \int_0^t \|v\cdot\bar{\tau}_{\al}\|_{L_2(S_1)}^2
\le A^2 \eenn where the existence of such constant can be obtained
as it is shown in \cite{RZ1}, see Lemma~\ref{def-A} below.

\begin{definition}
We define \ben \label{D0}
 D_0 =\|d\|^6_{L_{\infty}(0,t;L_{3}(S_2))} +A^2 +1, \een
\ben \bal \label{la3-4} \La_2 = \|(\rot f)_3\|_{L_2(0,t;
H^2(\Om))}+ \|(\rot v)_3(0)\|_{L_2(\Om)}+ 1
\\ \La_3 =  \|f\|_{L_{5/3}(\Om^T)} +
\|v(0)\|_{W_{5/3}^{4/5}(\Om)}, \\ \La_4 = \|f\|_{L_2(\Om^T)} +
\|v(0)\|_{H^1(\Om)} .  \eal \een%
We note $h=v,_{x_3}, g=f,_{x_3}, x'=(x_1, x_2).$ We set \ben \bal \label{La-def}
\La = D_1+ D_2 +\|f_3\|^2_{L_2(0,t;L_{4/3}(S_2))} +
\|g\|^2_{L_2(0,t;L_{6/5}(\Om))}+ \|h(0)\|^2_{L_2(\Om)},\eal \een
where \ben \label{D-def} \bal
 D_1 & =  \|d_t\|^2_{L_2(0,t;H^1(S_2))} + \|d_{,x'}\|^2_{L_2(0,t;H^1(S_2))}, \\
D_2 & =  \|d_{,x'}\|^2_{L_{\infty}(0,t;H^1(S_2))}. \eal \een
\end{definition}

\begin{condition}
Let $D_0, \La, \La_2, \La_3, \La_4$ be finite.
\end{condition}

\begin{condition}
Let $\La$ be so small and $\|g\|_{L_2(\Om^T)},$
$\|h(0)\|_{H^1(\Om)}$ be such that there exists constant
$\mathcal{A}$ satisfying the relation \benn   c D_0^2 \La^2
\left[({\mathcal A} + \La) +
c({\mathcal A} + {\mathcal A}^2) + c \La_2 + c\La_3 \right] \\
\exp \left[\int_0^t (\|d\|^6_{L_3(S_2)}dt + cD_0({\mathcal A} +
\La) + c({\mathcal A} + {\mathcal A}^2) + c \La_2 + c\La_3 \right]
\\ + \|g\|^2_{L_2(\Om^T)} + \|h(0)\|^2_{H^1(\Om)}\le {\mathcal A}^2  \eenn

\end{condition}

\begin{theorem}
Assume that Conditions 1 and 2 hold. Then there exists a~solution
to problem (\ref{NS}) such that
$$
v,v_{,x_3}\in W_2^{2,1}(\Om^{T}),\nb p,\nb p_{,x_3}\in
L_2(\Om^{T})
$$
and \ben \label{main} \bal
&\|v_{,x_3}\|_{W_2^{2,1}(\Om^{T}}\le \mathcal{A},\\
&\|v\|_{W_2^{2,1}(\Om^{T}}+\|\nabla p\|_{L_2(\Om^{T})}\le
\va(\mathcal{A},D_0, \La, \La_2, \La_3, \La_4),\\
&\|\nb p_{,x_3}\|_{L_2(\Om^{T})}\le\va(\mathcal{A},D_0, \La, \La_2, \La_3, \La_4), \\
\eal \een where $\va$ is an increasing positive function.
\end{theorem}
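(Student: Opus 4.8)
The plan is to derive the a priori estimates asserted in (\ref{main}) and then construct the solution by regularizing a weak solution in the spirit of \cite{RZ1,Z1}; once the bounds are uniform, existence follows from a standard compactness/limit argument, so the heart of the matter is the a priori estimate. I begin from a weak solution $v$ with the energy bound $\|v\|_{V_2^0}\le A$ supplied by Lemma~\ref{def-A}, which controls the large but globally bounded part of the flow, and I treat the inflow/outflow $d$ as fixed data entering only through $S_2$.

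The decisive step is to exploit the cylindrical geometry by differentiating the whole system (\ref{NS}) in the axial direction $x_3$ and estimating $h=v_{,x_3}$. On the lateral part $S_1$ the outward normal $\bar n$ and the tangents $\bar\tau_\al$ do not depend on $x_3$, so differentiating the slip conditions $(\ref{NS})_{3,4}$ yields homogeneous boundary conditions for $h$ of the same type; on $S_2$ the data are governed by the axial behaviour of $d$, which is why the norms $D_1,D_2$ of $d_t$ and $d_{,x'}$ enter $\La$. The equation for $h$ has the form of a Stokes system with forcing $g=f_{,x_3}$ and a quadratic term $v\cdot\nb h+h\cdot\nb v$ coming from differentiating $v\cdot\nb v$; the point is that in this differentiated equation the dangerous large axial transport is itself differentiated, so its nonlinear contribution can be absorbed using only the smallness of $\La$, $\|g\|_{L_2(\Om^T)}$ and $\|h(0)\|_{H^1(\Om)}$ together with the fixed bound $A$. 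Carrying out the energy estimate for $h$ and closing it by Gronwall's inequality produces precisely the self-consistent inequality of Condition~2: the exponential factor collects the linear-in-$h$ coefficients built from $D_0$, $\La_2$, $\La_3$ and the flux norm $\|d\|_{L_3(S_2)}^6$, while the prefactor is proportional to the small $\La^2$, and the requirement that a constant $\mathcal A$ satisfy that inequality is exactly what forces the first line of (\ref{main}), $\|v_{,x_3}\|_{W_2^{2,1}(\Om^T)}\le\mathcal A$.

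With $h$ under control, the remaining bounds are obtained by regarding (\ref{NS}) as a linear Stokes problem for $v$ with right-hand side $f-v\cdot\nb v$ and applying the $W_2^{2,1}$ theory for the Stokes system under the slip conditions. The convective term is split into its axial and transversal parts: the axial derivatives are bounded by the just-established control of $h$, while the transversal ones are handled through the weak bound $A$, the third vorticity component $(\rot v)_3$ (whose forcing is governed by $\La_2$) and interpolation, the lower-order data being packaged in $D_0,\La_3,\La_4$. This yields the second line of (\ref{main}) with an increasing function $\va$ of the listed quantities. The pressure estimates then follow by reading $\nb p$ off the momentum equation, and by differentiating that relation in $x_3$ one estimates $\nb p_{,x_3}$ in $L_2(\Om^T)$, giving the third line of (\ref{main}).

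I expect the main obstacle to be the axial-derivative estimate for $h$, specifically absorbing the differentiated nonlinearity $v\cdot\nb h+h\cdot\nb v$ with no smallness of $v$ itself. Controlling it requires using the cylindrical structure to keep the boundary terms of the correct sign under the slip conditions, and a careful interpolation arranged so that the genuinely large quantities appear only inside the fixed constant $A$ and the Gronwall exponent, leaving the prefactor proportional to the small $\La$. This balance is exactly what makes the self-consistent choice of $\mathcal A$ in Condition~2 possible and thereby closes the nonlinear estimate.
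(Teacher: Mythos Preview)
Your proposal captures the paper's a priori estimate scheme correctly: differentiate in $x_3$, derive an energy inequality for $h=v_{,x_3}$ whose Gronwall exponent is built from $\|\nabla v\|_{L_2(0,t;L_3)}$, bound that exponent through the vorticity component $\chi=(\rot v)_3$, the $(\rot,\Div)$ elliptic system for $v'$, and Stokes regularity in $W^{2,1}_{5/3}$ then $W^{2,1}_2$, and finally close via the self-consistent choice of $\mathcal A$ in Condition~2. This is exactly the route of Sections~3.1--3.3 and Lemmas~\ref{3.7}--\ref{A-lemma}; the non-homogeneous Dirichlet data for $h$ on $S_2$ (which you allude to) are handled there by subtracting an explicit lifting $\tilde h$ and working with $k=h-\tilde h$.

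The one substantive divergence is the existence step. You propose a compactness/limit argument on regularized weak solutions; the paper instead applies the Leray--Schauder fixed point theorem to the pair of maps $\Phi_1:(\tilde v,\lambda)\mapsto (v,p)$ and $\Phi_2:(\tilde v,\tilde h,\lambda)\mapsto (h,q)$ defined by the $\lambda$-scaled linear problems, with compactness obtained from the imbedding $W^{2,1}_2(\Om^T)\hookrightarrow L_{2r}(0,T;W^1_{6\eta/(3+\eta)}(\Om))$ for $r=\eta=2$. Both approaches are viable once the uniform bound $\|h\|_{W^{2,1}_2}\le\mathcal A$ is secured; the Leray--Schauder route builds the regular solution directly and avoids having to redo the inhomogeneous boundary bookkeeping on $S_2$ uniformly along an approximation scheme, while your compactness route would require specifying such a scheme (Galerkin or mollification) and checking that the auxiliary constructions for $\tilde h$, $\chi$, and the $(\rot,\Div)$ system survive it.
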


The structure of the paper is the following. First, in Section 2,
since the boundary conditions in the problem (\ref{NS}) are not
homogeneous, we introduce auxiliary variables and study the
corresponding equations. In Section 3.1 and 3.2 we obtain the
energy inequalities for $h=v,_{x_3}$ and next for $\chi= (\rot
v)_{3}$. Then, through the elliptic system relating $\chi$ and
$v_{x_1}, v_{x_2}$, we can conclude the relation
for $v$ in terms of $h$ norms in Section 3.3. Next, the energy inequality for $h$ is
improved by getting a bound for the energy norm of $h,$ depending on some norm of $\nb v.$
This relation is used to show the estimate for $h$ and $v$. In
Section 4, we prove the existence using a priori estimates and
the Leray-Schauder theorem.

\section{Auxiliary problems}
\setcounter{equation}{0}%
\setcounter{lemma}{0}

 In order to define
weak solutions to problem (\ref{NS}) and to obtain the energy type
estimate we need to make the boundary condition $(\ref{NS})_4$
homogeneous. Thus, we have to reformulate the main problem using
some auxiliary functions.

First, we extend functions corresponding to inflow and
outflow so that \ben \d_i|_{S_2(a_i)} = d_i,\,\, i= 1,2,\, a_1 =
-a,\, a_2= a \een We introduce the function $\eta$, see \cite{L}.
\benn \eta(\si;\eps,\rho) = \left\{ \begin{array}{lr} 1 & 0 \le
\si \le \rho e^{-1/\eps}\equiv r, \\ -\eps
\ln\dis{\frac{\si}{\rho}} & r < \si\le \rho, \\ 0 & \rho < \si <
\infty.
\end{array} \right.
\eenn

We calculate \benn \frac{d\eta}{d\si} = \eta'(\si;\eps,\rho) =
\left\{
\begin{array}{lr} 0 & 0 < \si \le r, \\
-\dis{\frac{\eps}{\si}} & r < \si\le \rho, \\ 0 & \rho < \si <
\infty.
\end{array} \right.
\eenn so that $|\eta'(\si;\eps,\rho)| \le \dis{\frac{\eps}{\si}}.$
We define functions $\eta_i$ on the neighborhood of $S_2$ (inside
$\Om$): \benn \eta_i= \eta(\si_i;\eps,\rho),\  i=1,2,\eenn where
$\si_i$ denote local coordinates defined on small neighborhood of
$S_2(a_i):$ \benn \si_1 = a+x_3, \ \si_2= a-x_3 \eenn and we set
\ben \label{al} \bal \al & = \sum_{i=1}^2 \d_i \eta_i,
\\ b & = \al \bar{e}_3, \ \bar{e}_3= (0,0,1). \eal \een
We construct function $u$ so that \ben \label{u} u= v-b. \een
Therefore, \benn \Div u & = & - \Div b = -\al_{x_3}\quad {\rm in}
\ \ \Om, \\ u\cdot \bar{n} & = & 0 \quad {\rm on} \ \ S. \eenn
Then, the boundary condition for $u$ is homogeneous. The
compatibility condition takes the form \benn \int_{\Om} \al,_{x_3}
dx = -\int_{S_2(-a)} \al|_{x_3=-a} dS_2 + \int_{S_2(a)}
\al|_{x_3=a}dS_2 = 0 \eenn We define function $\va$ as a solution
to the Neumann problem \ben \bal \label{va} \De \va & = - \Div b
\quad {\rm in} \ \ \Om, \\ \bar{n}\cdot\nb \va & = 0 \quad {\rm
on} \ \ S,\\ \int_{\Om} \va dx & = 0. \eal \een Next, we set \ben
\label{w} w = u - \nb \va = v - (b+\nb \va) \equiv v - \de. \een

Then, $(w,p)$ is a solution to the following problem \ben
\label{NS-w} \bal w_{t}+w\cdot\nabla w + w\cdot\nabla \de + \de
\cdot\nabla w -\Div \T(w,p) & \\  =  f - \de_t -\de \cdot\nabla
\de + \nu\Div \D(\de) = F(\de,t) \quad &{\rm in}\ \
 \Omega^T,\\
\Div w=0\quad &{\rm in}\ \ \Omega^T,\\
 w\cdot\bar n=0\quad &{\rm
on}\  S^T ,\\
\nu \bar n\cdot\D(w)\cdot\bar\tau_\alpha + \ga w \cdot
\bar\tau_{\al} & \\ = - \nu \bar n\cdot\D(\de)\cdot\bar\tau_\alpha
- \ga \de \cdot \bar\tau_{\al} = B_{1\al}(\de) ,\ \
\alpha=1,2,\quad
 &{\rm on}\ \ S_1^T,\\
  \bar n\cdot\D(w)\cdot\bar\tau_\alpha = -\bar n\cdot\D(\de)\cdot\bar\tau_\alpha= B_{2\al}(\de),\ \
\alpha=1,2,\quad
 &{\rm on}\ \ S_2^T,\\
w\big|_{t=0}=v(0) - \de(0) = w(0)\quad &{\rm in}\ \ \Omega, \eal
\een where $\Div \de =0.$ Moreover, we set \benn \bar{n}|_{S_1} =
\frac{(\va_0,_{x_1}, \va_0,_{x_2}, 0)}{\sqrt{\va_0,_{x_1}^2 +
\va_0,_{x_2}^2}},\  \bar{\tau}_1|_{S_1}= \frac{(-\va_0,_{x_2},
\va_0,_{x_1}, 0)}{\sqrt{\va_0,_{x_1}^2 + \va_0,_{x_2}^2}},\
\bar{\tau}_2|_{S_1}=(0,0,1) = \bar{e}_3, \\ \bar{n}|_{S_2(-a)} = -
\bar{e}_3, \ \bar{n}|_{S_2(a)} = \bar{e}_3,\
\bar{\tau}_1|_{S_2}=\bar{e}_1,\  \bar{\tau}_2|_{S_2}= \bar{e}_2
\eenn where $\bar{e}_1=(1,0,0), \bar{e}_2= (0,1,0).$
 For the system (\ref{NS-w}) we can define weak solutions
and then from \cite{RZ1} we have
\begin{lemma} \label{def-A}
Assume the compatibility condition (\ref{d}). Assume that $v(0)
\in L_2(\Om),$ $f \in L_2(0,T;L_{6/5}(\Om)),$ $d_i \in
L_{\infty}(0,T; W^{s-1/p}_p(S_2)) \bigcap
L_2(0,T;W^{1/2}_2(S_2)),$ $\frac{3}{p} +\frac{1}{3} \le s, p>3$ or
$p=3, s> \frac{4}{3},$ $d_{i,t} \in L_2(0,T;W^{1/6}_{6/5}(S_2)),
i=1,2.$
 Then there exists a weak solution $v$ to problem (\ref{NS}) such
that $v$ is weakly continuous with respect to $t$ in $L^2(\Om)$ norm and
$v$ converges to $v_0$ as $t\ra 0$ strongly in $L^2(\Om)$ norm.
Moreover, $v\in V^0_2(\Om^T),$ $v\cdot\bar{\tau}_{\al} \in
L_2(0,T;L_2(S_1)), \al =1,2,$ and $v$ satisfies \ben \label{est-v}
\bal & \|v\|^2_{V^0_2(\Om^t)} + \ga \sum_{\al=1}^2 \int_0^t
\|v\cdot\bar{\tau}_{\al}\|_{L_2(S_1)}^2 \le 2
\|f\|^2_{L_2(0,t;L_{6/5}(\Om))}
\\ & + \va\left(\sup_{\tau\le t}\|d\|_{W^{s-1/p}_3(S_2)}\right)\left(\|d\|_{L_2(0,t;W^{1/2}_2(S_2))}^2 +
\|d_t\|_{L_2(0,t;W^{1/6}_{6/5}(S_2))}^2\right) +
\|v(0)\|_{L_2(\Om)}^2 \equiv A^2 \eal \een where $\va$ is a nonlinear
positive increasing function of its argument and $t\le T.$
\end{lemma}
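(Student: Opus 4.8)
The plan is to prove existence of a weak solution by the Faedo--Galerkin method applied to the reduced problem (\ref{NS-w}), whose normal boundary component is homogeneous, and then to recover $v=w+\de$. First I would fix the notion of weak solution: test (\ref{NS-w}) against solenoidal fields $\phi$ with $\phi\cdot\bar n=0$ on $S$, integrate the stress term by parts, and use $(\ref{NS-w})_{4,5}$ to turn $\Div\T(w,p)$ into $\frac{\nu}{2}\int_\Om\D(w):\D(\phi)\,dx+\ga\sum_\al\int_{S_1}(w\cdot\bar\tau_\al)(\phi\cdot\bar\tau_\al)\,dS_1$, the data $B_{1\al},B_{2\al}$ passing to boundary integrals on the right and the pressure dropping out on the solenoidal test space. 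Choosing a basis $\{\phi_k\}$ of that space and setting $w^N=\sum_{k\le N}c^N_k(t)\phi_k$ gives an ODE system for the coefficients, locally solvable by standard Carath\'eodory/Picard theory; global solvability follows from the a priori bound below.

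The central step is the energy estimate obtained by testing with $w^N$ itself, which yields $\frac12\frac{d}{dt}\|w^N\|^2_{\lo}+\frac{\nu}{2}\|\D(w^N)\|^2_{\lo}+\ga\sum_\al\|w^N\cdot\bar\tau_\al\|^2_{L_2(S_1)}$ equal to a sum of terms I must control. The term $\int_\Om(\de\cdot\nb w^N)\cdot w^N\,dx$ vanishes since $\Div\de=0$ and $w^N\cdot\bar n=0$. The forcing $\int_\Om F\cdot w^N\,dx$ and the boundary data terms are handled by duality, Young's inequality and trace theorems, producing the $\|f\|^2_{L_2(L_{6/5})}$ and $\de$-dependent contributions. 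Coercivity of the left side, i.e.\ control of $\|w^N\|_{H^1}$, comes from a Korn-type inequality combined with the friction term $\ga\sum_\al\|w^N\cdot\bar\tau_\al\|^2_{L_2(S_1)}$, which removes the rigid motions otherwise lying in the kernel of $\D$.

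The genuinely delicate term is $\int_\Om(w^N\cdot\nb\de)\cdot w^N\,dx$, where largeness of the flux enters through $\de$. Here the construction of $b=\al\bar e_3$ from the cutoff $\eta$ is decisive: since $|\eta'(\si;\eps,\ro)|\le\eps/\si$ and $\eta$ is supported in a thin collar $\{\si\le\ro\}$ of $S_2$, the Hardy inequality $\int_\Om|w^N|^2/\si^2\,dx\le c\|w^N\|^2_{H^1(\Om)}$ (valid because $w^N\cdot\bar n=0$) gives $|\int_\Om(w^N\cdot\nb b)\cdot w^N|\le c\,\eps\,\|d\|_{L_\infty(L_3(S_2))}\,\|w^N\|^2_{H^1}$. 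Choosing $\eps$ small absorbs this into the dissipation at the cost of a constant growing with the data; the contribution of $\nb\va$ is treated by elliptic estimates for (\ref{va}) in terms of $\|d\|$. Integrating in time then produces the uniform bound $\|w^N\|^2_{V^0_2(\Om^t)}+\ga\sum_\al\int_0^t\|w^N\cdot\bar\tau_\al\|^2_{L_2(S_1)}\le 2\|f\|^2_{L_2(0,t;L_{6/5}(\Om))}+\va(\sup\|d\|)(\dots)+\|w(0)\|^2_{\lo}$, which, after restoring $v=w+\de$ and bounding $\de$ by $\|d\|$, is precisely (\ref{est-v}).

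Finally, for the passage to the limit the uniform $V^0_2$ bound yields weak-$*$ convergence of $w^N$ and, via an Aubin--Lions argument (bounding $\pa_t w^N$ in a dual space through the equation), strong convergence in $L_2(\Om^T)$, which suffices to pass to the limit in the quadratic term $w^N\cdot\nb w^N$. Weak continuity in $t$ in $\lo$ and the strong attainment $v\to v(0)$ as $t\to0$ follow from the energy inequality together with lower semicontinuity of the norm, as in classical Leray--Hopf theory. I expect the main obstacle to be exactly the large-flux term $\int_\Om(w\cdot\nb\de)\cdot w\,dx$: making the absorption rigorous needs a careful Hardy/Korn interplay and the right choice of $\eps=\eps(\ro)$ in $\eta$, and it is the sole place where the nonlinear dependence $\va(\sup\|d\|)$ in (\ref{est-v}) is forced.
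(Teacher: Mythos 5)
Your overall route --- homogenize the normal data with the Hopf--Ladyzhenskaya extension $\de=b+\nb\va$ built from the cutoff $\eta$, run a Galerkin scheme for (\ref{NS-w}), absorb the flux-induced trilinear terms into the dissipation via the collar parameters $\eps,\rho$, get coercivity from a Korn inequality plus the friction term, and pass to the limit by Aubin--Lions --- is exactly the construction the paper relies on (the lemma is quoted from \cite{RZ1} without proof, and the paper reuses the same $I_1,I_2$-type bounds in Section~3). However, two of your key cancellation/absorption claims are wrong as stated. First, $\int_\Om(\de\cdot\nb w^N)\cdot w^N\,dx$ does \emph{not} vanish: integration by parts leaves $\frac12\int_S(\de\cdot\bar n)|w^N|^2\,dS$, and while $\Div\de=0$ kills the volume term and $\de\cdot\bar n=0$ on $S_1$, on $S_2$ one has $\de\cdot\bar n=v\cdot\bar n=d$ (that is the whole point of the extension), so the term equals $\frac12\int_{S_2(a)}d_2|w^N|^2\,dS_2-\frac12\int_{S_2(-a)}d_1|w^N|^2\,dS_2$. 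The condition $w^N\cdot\bar n=0$ is irrelevant here; it kills $\int_\Om(w^N\cdot\nb w^N)\cdot w^N\,dx$, which you have apparently conflated with this term. The inflow contribution has the unfavorable sign and is quadratic in $w^N$ with the \emph{large} coefficient $d_1$ and no small parameter from the collar (it lives on $S_2$ itself, where $\eta\equiv1$), so it must be estimated, e.g.\ by $\|d\|_{L_3(S_2)}\|w^N\|^2_{L_3(S_2)}\le\eps\|w^N\|^2_{H^1(\Om)}+c(\eps)\,\|d\|^6_{L_3(S_2)}\|w^N\|^2_{\lo}$ --- precisely the mechanism that in Section~3 produces the terms $\|d\|^6_{L_3(S_2)}\|k\|^2_{\lo}$ and that forces the nonlinear dependence $\va\bigl(\sup_{\tau\le t}\|d\|\bigr)$ in (\ref{est-v}). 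Without this term your energy estimate does not close, and with it the argument is no longer pure absorption.

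Second, the Hardy inequality you invoke, $\int_\Om|w^N|^2/\si^2\,dx\le c\|w^N\|^2_{H^1(\Om)}$, is false under the sole hypothesis $w^N\cdot\bar n=0$: with $\si$ the distance to $S_2$, only the component $w_3^N$ has zero trace there (on $S_2$ the normal is $\pm\bar e_3$), and for a tangential component not vanishing on $S_2$ the weighted integral diverges, since $\int_0^\rho\si^{-2}\,d\si=\infty$. The estimate of the delicate term is rescued only by the specific structure $b=\al\bar e_3$, which gives $(w^N\cdot\nb b)\cdot w^N=(w^N\cdot\nb\al)\,w_3^N$, so the dangerous factor $\eps/\si$ coming from $\eta'$ always multiplies $w_3^N$, to which one-dimensional Hardy does apply; the remaining pieces of $\nb\al$ (the tangential derivatives $\d_{i,x'}\eta_i$ and $\d_{i,x_3}\eta_i$) carry instead the small factor $\rho^a$ from the thin support, and the contribution of $\nb\va$ is controlled through Green-function bounds of the type (\ref{green}), $\|\nb\va\|_{L_p(\Om)}\le c\rho^{1/p_0'}(\cdots)$, not by generic elliptic regularity. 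So your skeleton matches the cited construction, but the two steps above would fail as written, and repairing them is exactly where the $d$-dependent structure of $A^2$ in (\ref{est-v}) originates.
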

Since we would like to have not restricted magnitudes of $v(0), f$
and $d,$ we would need some smallness of $v_{x_3}(0)$ and
$f_{x_3}$ in $L_2$ norms, instead. Therefore, we set $h=v,_{x_3},
\ q=p_{,x_3}$ that are solutions to the problem: (see Lemma~3.1,
\cite{Z1}.) \ben \label{h-q} \bal h_{,t}-\Div \T(h,q)& =
-v\cdot\nabla h-h\cdot
 \nabla v+g\quad &{\rm in}\ \ \Omega^T,\\
\Div h &= 0\quad &{\rm in}\ \ \Omega^T,\\ \bar{n}\cdot h &= 0,\ \
\bar n\cdot\D(h)\cdot\bar\tau_\alpha + \ga h \cdot\bar\tau_\alpha
=0,\ \
 \alpha=1,2\quad &{\rm on}\ \ S_1^T,\\
h_i &= -d_{x_i},\ \ i=1,2,\ \ h_{3,x_3}= \De' d \quad &{\rm on}\ \ S_2^T,\\
h\big|_{t=0}&= h(0)\quad &{\rm in}\ \ \Omega.\eal \een where $g=
f_{,x_3}, \De' = \pa_{x_1}^2 + \pa_{x_2}^2.$ With $\chi=(\rot
v)_3$, we consider the problem \ben \label{v-chi} \bal
&v_{1,x_2}-v_{2,x_1}=\chi\quad &{\rm in}\ \ \Omega',\\
&v_{1,x_1}+v_{2,x_2}=-h_3 \quad &{\rm in}\ \ \Omega',\\
&v'\cdot\bar n'=0\quad &{\rm on}\ \ S'_1, \eal \een where $\Om'$ is a cross-section of $\Om$ by the plane perpendicular to the $x_3$ axis crossing it within the interval $(-a,a)$ and $\pa \Om' = S_1 \bigcap P \equiv S_1'.$ Namely, \benn \Om'& = & \Om \cap \{ {\rm plane:} \  x_3={\rm const} \in
(-a,a) \} \\
S'_1 & = & S_1\cap\, \{{\rm plane:}\ x_3 = \const\in(-a,a)\} \eenn
and $x_3$, $t$ are treated as parameters. Then, the function
$\chi=(\rot v)_3= v_{2,x_1} - v_{1,x_2}$ satisfies (see Lemma~3.2
in \cite{Z1}) \ben \label{chi} \bal
&\chi_{,t}+v\cdot\nabla\chi-h_3\chi+h_2v_{3,x_1}-h_1v_{3,x_2}
 -\nu\Delta\chi=F_3\quad &{\rm in}\ \ \Omega^T,\\
&\chi=v_i(n_{i,x_j}\tau_{1j}+\tau_{1i,x_j}n_j)+v\cdot\bar\tau_1
 (\tau_{12,x_1}-\tau_{11,x_2})\\ & + \frac{\ga}{\nu} v_j \tau_{1j}
\equiv\chi_*\quad &{\rm on}\ \ S_1^T,\\  &\chi_{,x_3}=0\quad
&{\rm on}\ \ S_2^T,\\ &\chi\big|_{t=0}=\chi(0)\quad &{\rm in}\ \
\Omega, \eal \een%
where $F_3 = f_{2,x_1} - f_{1,x_2}.$

\section{Estimates}
\setcounter{equation}{0} %
\setcounter{lemma}{0}

In order to obtain the energy estimate for $h$ -- a solution to
problem (\ref{h-q}) we have to make the Dirichlet boundary
condition on $S_2^T$ homogeneous. For this purpose we are looking
for such a function $\tilde{h}$ that \ben \label{-h} \bal
\Div \h & = 0 \quad {\rm in} \ \ \Om, \\
\h & = 0 \quad {\rm on} \ \ S_1, \\
\h_i & = -d_{,x_i}, \ i=1,2 \quad {\rm on} \ \ S_2, \\
\h_3 & = 0 \quad {\rm on} \ \ S_2. \eal \een

\begin{lemma} \label{h-fala} There exists a function $\h$ satisfying (\ref{-h}) such that \ben \label{h-} \bal
\| \h\|_{W^1_{\si}(\Om)} \le c(\eps + \rho^a)\|d_{,x'} \|_{W^1_{\si}(S_2)} , \\
\|\h_{,t}\|_{L_{\si}(\Om)}\le c (\eps + \rho^a)\|d_{,x't} \|_{L_{\si}(S_2)}, \eal \een
where $\si \in (1,\infty)$ and $a>0.$
\end{lemma}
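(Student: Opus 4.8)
The plan is to build $\h$ by hand as a localized, divergence--corrected extension of the boundary datum $-d_{,x'}$, in the same spirit as the construction of $\de=b+\nb\va$ in (\ref{al})--(\ref{w}). Near each face $S_2(a_i)$ I would work in the normal variable $\si_i$ and set, as a first approximation, the tangential part $\h^{(0)}_i=-\d_{,x_i}\,\eta(\si_i;\eps,\ro)$ for $i=1,2$ and $\h^{(0)}_3=0$, where $\d$ is the extension of $d$ with $\d|_{S_2(a_i)}=d$ already introduced before (\ref{al}). Since $\eta\equiv1$ for $\si_i\le r$ this gives $\h^{(0)}_i|_{S_2}=-d_{,x_i}$ and $\h^{(0)}_3|_{S_2}=0$, while $\eta\equiv0$ for $\si_i>\ro$ confines $\h^{(0)}$ to the $\ro$--layer adjacent to $S_2$. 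Assuming, as the flux data allow, that $d_{,x'}$ vanishes near the edge $S_1\cap S_2$, the field $\h^{(0)}$ also vanishes on $S_1$, so all the boundary relations of (\ref{-h}) already hold for $\h^{(0)}$.

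It remains to remove the divergence without disturbing these boundary values. A direct computation gives $\Div\h^{(0)}=-\eta\,\De'\d$ (plus terms in which a derivative of $\d$ is multiplied by $\eta$), again supported in the $\ro$--layer, and $\int_\Om\Div\h^{(0)}\,dx=\int_S\h^{(0)}\cdot\bar n\,dS=0$ because $\h^{(0)}_3=0$ on $S_2$ and $\h^{(0)}=0$ on $S_1$. Hence I can apply the Bogovskii (Solonnikov) solenoidal operator on $\Om$ to obtain $W$ with $\Div W=\Div\h^{(0)}$, $W|_S=0$ and $\|W\|_{W^1_\si(\Om)}\le c\,\|\Div\h^{(0)}\|_{L_\si(\Om)}$. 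Setting $\h=\h^{(0)}-W$ makes $\Div\h=0$ in $\Om$, and since $W$ vanishes on all of $S$, $\h$ keeps every boundary value of $\h^{(0)}$; thus $\h$ satisfies (\ref{-h}).

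For the bound (\ref{h-}) I would split $\nb\h$ according to whether a spatial derivative lands on $\d$ or on $\eta$. Every term of the first type, as well as $\h$ itself and the correction $W$, meets only $0\le\eta\le1$ supported in the layer; integrating in $\si_i$ gives a factor $\left(\int_0^\ro\eta^\si\,d\si_i\right)^{1/\si}\le c\,\ro^{1/\si}$, and the remaining $\d$--factors are bounded by $\|d_{,x'}\|_{W^1_\si(S_2)}$ (note $\De'\d=\nb'\!\cdot\!\nb'\d$ is of this order), which produces the $\ro^{a}$ part with, say, $a=1/\si$. The genuinely delicate terms are the normal derivatives of the tangential components, where $\eta$ is differentiated: here one must exploit the \emph{logarithmic} profile through $|\eta'(\si_i)|\le\eps/\si_i$ together with the smallness of its mean, $\int_0^\ro\eta\,d\si_i\le c\,\eps\ro$, and a Hardy/duality argument in the normal variable that trades the singular weight $1/\si_i$ against a derivative of the test function, so as to extract the factor $\eps$ rather than an uncontrolled power of $\ro$. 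Collecting the two types of contributions yields $\|\h\|_{W^1_\si(\Om)}\le c(\eps+\ro^{a})\|d_{,x'}\|_{W^1_\si(S_2)}$.

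Finally, the datum $d$ enters the whole construction linearly and $\eta,\d$ carry no $t$--dependence, so $\h_{,t}$ has exactly the structure of $\h$ with $d$ replaced by $d_{,t}$; since the second estimate in (\ref{h-}) asks only for the $L_\si$ norm (no spatial gradient of $\h_{,t}$), the layer bounds above give $\|\h_{,t}\|_{L_\si(\Om)}\le c(\eps+\ro^{a})\|d_{,x't}\|_{L_\si(S_2)}$ at once. I expect the main obstacle to be precisely the normal--derivative term just described: a crude thin--layer cut--off would make $\|\pa_{x_3}\h\|_{L_\si}$ grow like $\ro^{1/\si-1}$, so the entire point of using the logarithmic $\eta$ (with its $L_1$--bounded derivative and $O(\eps\ro)$ mean) is to convert this would--be blow--up into the harmless small factor $\eps$; making this conversion close simultaneously with the solenoidal correction $W$ and the homogeneous condition on $S_1$ is the crux of the proof.
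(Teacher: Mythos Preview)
Your first step coincides with the paper's: the localized tangential extension $\h^{(0)}$ is exactly the paper's $\bar h$. The divergence correction, however, is genuinely different. You propose the Bogovskii operator with zero trace on all of $S$; the paper instead solves a Neumann problem $\De\phi=\Div\bar h$, $\bar n\cdot\nb\phi|_S=0$, and then a Stokes-type system for a vector $\la$ with prescribed Dirichlet data on $S_1$ and $S_2$, setting $\h=\bar h-(\la+\nb\phi)$ and estimating $\phi,\la$ via Green functions.

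The difference is not cosmetic: it is forced by the boundary condition $\h=0$ on $S_1$. The only compatibility the data supply is $\bar h\cdot\bar n|_{S_1}=0$ (this is the paper's relation (\ref{comp}), derived from $h\cdot\bar n|_{S_1}=0$); the \emph{tangential} traces of $\bar h$ on $S_1$ are generically nonzero, since near the edge $\bar S_1\cap\bar S_2$ one has $\eta_i\equiv1$ and $\bar h\cdot\bar\tau_1|_{S_1}=-\d_{,x_i}\tau_{1i}$. Your sentence ``assuming, as the flux data allow, that $d_{,x'}$ vanishes near the edge'' inserts a hypothesis that is nowhere in the problem setup and is not implied by (\ref{comp}). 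Without it, the Bogovskii correction $W$ (which has $W|_S=0$) leaves the nonzero tangential trace of $\h^{(0)}$ on $S_1$ untouched, so $(\ref{-h})_2$ fails for $\h=\h^{(0)}-W$. The paper's Stokes field $\la$ is introduced precisely to absorb this trace: its boundary condition $\la\cdot\bar\tau_\be=-\bar\tau_\be\cdot\nb\phi+\bar h\cdot\bar\tau_\be$ on $S_1$ is designed so that $\bar h-\la-\nb\phi$ has vanishing tangential components there, while $\la\cdot\bar n=0$ and the Neumann condition for $\phi$ handle the normal component.

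If you want to keep a Bogovskii-style argument, you must first lift and subtract the tangential trace of $\bar h$ on $S_1$ (with controlled $W^1_\si$ norm) before applying the solenoidal correction---which effectively rebuilds the role of $\la$. Your diagnosis that the $x_3$-derivative hitting $\eta$ is the delicate term in the estimate is correct (and the paper is equally terse there), but the structural gap above has to be closed first.
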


\begin{proof}
First, we construct function $\h.$ Let us define the functions
\benn
\bar{h}_i & = & -(\eta_1 \d_{1,x_i} +\eta_2\d_{2,x_i}), i=1,2, \\
\bar{h}_3 & = & 0, \eenn where $\eta_1, \eta_2$ are smooth cut-off
functions introduced before and $\d_i$ is an extension of $d_i$ on
a neighbourhood of $S_2(a_i), i=1,2.$ We have the compatibility
condition \ben \label{comp} \sum_{i=1}^2 n_i|_{S_1} \cdot
d_{\al,x_i} = 0, \quad \al=1,2 \een which follows from the fact
that $\bar{n}|_{S_1}$ does not depend on $x_3,$ so \benn v\cdot
\bar{n}|_{S_1} = 0 \Rightarrow v,_{x_3}\cdot\  \bar{n}|_{S_1} = 0
\Rightarrow h\cdot \bar{n}|_{S_1} = 0 \Rightarrow h|_{S_2}\cdot
\bar{n}|_{S_1} = 0 \eenn and in view of $(\ref{h-q})_4$ the
relation (\ref{comp}) holds.

Hence $\bar{h}$ is a solution to the problem \benn \Div \bar{h} =
- (\eta_1 \De' \d_{1} +\eta_2 \De'\d_{2}) \quad {\rm in}\ \ \Om, \\
\bar{h} \cdot {\tau_1} = - (\eta_1 \d_{1,{x_i}} +\eta_2
\d_{2,{x_i}}) {\tau_{1i}} \quad {\rm on}\ \ S_1, \\ \bar{h}
\cdot {\tau_2} = 0 \quad {\rm on}\ \ S_1,
\\ \bar{h} \cdot \bar{n} = 0 \quad {\rm on}\ \ S_1, \\
\bar{h}_i = -d_{j,x_i}, \quad i,j=1,2 \quad {\rm on} \ \ S_2(a_j), \\
\bar{h}_3 = 0 \quad {\rm on} \ \ S_2. \eenn Next we define a
function $\phi$ such that \benn \De \phi = -(\eta_1 \De' \d_{1}
+\eta_2 \De' \d_{2}), \\ \bar{n} \cdot \nb\phi |_S = 0, \eenn and
we are looking for a function $\la$ and $\si$  such that \ben
\label{la}
\bal -\De \la+\nb \si = 0, \\ \Div \la =0, \\
\la\cdot\bar{\tau}_{\be}
= -\bar{\tau}_{\be}\cdot \nb \phi + \bar{h}\cdot \bar{\tau}_{\be}, \quad \be=1,2, \ \ {\rm on}\  S_1 \\
\la\cdot \bar{n} = 0, \ \ {\rm on} \ S_1 \\
\la_i= -\nb_i \phi,\ \ i=1,2, \ \ {\rm on}\  S_2 \\
\la_3 = 0 \ \ {\rm on}\  S_2. \eal \een Then \benn \h=\bar{h} -
(\la+\nb \phi) \eenn is a solution to problem (\ref{-h}). We can
get some estimates for $\h.$ We note that \benn
\|\bar{h}\|_{L_p(\Om)} \le \sum_{j=1}^2
\|\eta_j\d_{j,x'}\|_{L_p(\Om)} \le c\rho^{1/p'_0}
\sum_{j=1}^2\|\eta_j\d_{j,x'}\|_{L_{p_0}(\Om)}, \quad {\rm where}
\ \ \frac{1}{p_0}+\frac{1}{p'_0}=\frac{1}{p}. \eenn Let $G$ be the
Green function to the Neumann problem for $\phi.$ Then we have
\benn \phi(x) = \int_{\Om} G(x,y) \pa_{y_i}(\eta_1\d_{1,y_i} +
\eta_2\d_{2,y_i}) dy = -\Oi \nb_{y_i} G(x,y)(\eta_1\d_{1,y_i} +
\eta_2\d_{2,y_i}) dy,\eenn where we used that $\bar{h} \cdot
\bar{n}|_{S_1} = 0.$  Then we calculate \benn \nb_x \phi(x) = -\Oi
\nb_x \nb_{y_i} G(x,y)(\eta_1\d_{1,y_i} + \eta_2\d_{2,y_i})
dy\eenn and \ben \bal \label{green} \|\nb_x\phi\|_{L_p(\Om)} \le
\sum_{j=1}^2 \|\eta_j\d_{j,x'}\|_{L_p(\Om)} \le c\rho^{1/p'_0}
\sum_{j=1}^2\|\eta_j\d_{j,x'}\|_{L_{p_0}(\Om)}, \\ {\rm where} \ \
\frac{1}{p_0}+\frac{1}{p'_0}=\frac{1}{p}. \eal \een Applying now
the existence of the Green function to problem (\ref{la}) we have
\benn \la_i(x) = \int_{S_1} \frac{\pa G_{i\be}}{\pa n_{S_1}}
(-\bar{\tau}_{\be}\cdot \nb \phi + \bar{h}\cdot
\bar{\tau}_{\be})dS_1 + \int_{S_2}\frac{\pa G_{ij}}{\pa n_{S_2}}
(-\nb_j \phi) dS_2. \eenn In view of (\ref{green}) we obtain \ben
\label{lam} \|\la\|_{L_p(\Om)} \le c\rho^{1/p'_0}
\sum_{j=1}^2\|\eta_j\d_{j,x'}\|_{L_{p_0}(\Om)}. \een Continuing
the considerations and using properties of functions $\eta_i,
i=1,2$, yields that constructed function $\h$ satisfies
(\ref{h-}). This concludes the proof.
\end{proof}

\subsection{A priori inequalities for functions $k$ and $h$}

Let us introduce the new function $$ k= h-\h.$$ Then $k$ is a
solution to the problem \ben \label{k} \bal k_{,t}-\Div \T(h,q)& =
-v\cdot\nabla h-h\cdot
 \nabla v - \h,_t+g \equiv \bf{g} \quad &{\rm in}\ \ \Omega^T,\\
\Div k &= 0\quad &{\rm in}\ \ \Omega^T,\\ \bar{n}\cdot k &= 0,\ \
\nu \bar n\cdot\D(h)\cdot\bar\tau_\alpha + \ga h
\cdot\bar\tau_\alpha =0,\ \
 \alpha=1,2\quad &{\rm on}\ \ S_1^T,\\
k_i &= 0,\ \ i=1,2,\ \ h_{3,x_3}= \De' d \quad &{\rm on}\ \ S_2^T,\\
k\big|_{t=0}&= h(0)-\h(0) \equiv k(0)\quad &{\rm in}\ \
\Omega.\eal \een where $g= f_{,x_3}, \De' = \pa_{x_1}^2 +
\pa_{x_2}^2.$%
We have the following Korn inequality
\begin{lemma}\cite{Z1}
Assume that $\Om$ is not axially symmetric, \benn E_{\Om}(k) =
\sum_{i,j=1}^3 \Oi(k_{i,x_j} +k_{j,x_i})^2 dx,\ \  E_{\Om}(k) +
\sum_{{\al}=1}^2 \|k\cdot \bar{\tau}_{\al}\|^2_{L_2(S_1)} <\infty, \\
\|\De'd\|_{L_2(S_2)} + \|\h_{3,x_3}\|_{L_2(\Om)} +
\|\h_3\|_{L_2(\Om)} <\infty. \eenn Then the following inequality
is valid \ben \label{k-h1} \bal \|k\|^2_{H^1(\Om)} \le
c\left(E_{\Om}(k) + \sum_{{\al}=1}^2\ga\|k\cdot
\bar{\tau}_{\al}\|^2_{L_2(S_1)}\right) \\ +c
(\|\De'd\|_{L_2(S_2)}^2 + \|\h_{3,x_3}\|_{L_2(S_2)}^2 +
\|\h_3\|_{L_2(\Om)}). \eal \een
\end{lemma}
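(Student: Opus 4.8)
The statement is a Korn-type inequality tailored to the mixed slip--Dirichlet conditions that $k$ inherits from $(\ref{k})$. The plan is to combine the purely algebraic Korn identity with a compactness (contradiction) argument in the spirit of Lions: the identity converts $E_\Om(k)$ into the homogeneous $H^1$ seminorm up to boundary terms, while the contradiction argument upgrades the seminorm to the full $H^1$ norm. Throughout, the inhomogeneous data on $S_2$ are simply transferred to the right-hand side, which is what produces the terms $\|\De'd\|_{L_2(S_2)}^2$, $\|\h_{3,x_3}\|_{L_2(S_2)}^2$ and $\|\h_3\|_{L_2(\Om)}$.

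First I would establish the identity. Expanding the integrand of $E_\Om(k)$ gives $E_\Om(k)=2\|\nb k\|_{L_2(\Om)}^2+2\sum_{i,j}\Oi k_{i,x_j}k_{j,x_i}\,dx$, and I would integrate the last (mixed) term by parts twice. Because $\Div k=0$ by $(\ref{k})_2$, the resulting volume contribution vanishes identically, leaving only surface integrals over $S_1$ and $S_2$. On $S_1$ I would use the slip condition $\bar n\cdot k=0$ together with the controlled tangential traces, the curvature of the cross-section curve entering through derivatives of $\bar n$ and $\bar\tau_\al$; these contributions are absorbed into $E_\Om(k)+\ga\sum_\al\|k\cdot\bar\tau_\al\|_{L_2(S_1)}^2$. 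On $S_2$ I would use $k_1=k_2=0$, so that only normal-derivative terms of $k_3=h_3-\h_3$ survive; using $h_{3,x_3}=\De'd$ from $(\ref{k})_4$ and the regularity of $\h$, these are estimated by $\|\De'd\|_{L_2(S_2)}$, $\|\h_{3,x_3}\|_{L_2(S_2)}$ and $\|\h_3\|_{L_2(\Om)}$. This yields the gradient bound $\|\nb k\|_{L_2(\Om)}^2\le c\big(E_\Om(k)+\ga\sum_\al\|k\cdot\bar\tau_\al\|_{L_2(S_1)}^2\big)+c(\text{data})$.

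It remains to control $\|k\|_{L_2(\Om)}$, and here I would argue by contradiction. If the asserted inequality failed, there would be a sequence $k_n$ with $\|k_n\|_{H^1(\Om)}=1$ while the homogeneous part of the right-hand side tends to $0$. By the Rellich theorem a subsequence converges strongly in $L_2(\Om)$ to a limit $k_*$; since $\|\nb k_n\|_{L_2(\Om)}\to0$, the convergence is in fact in $H^1(\Om)$, $\|k_*\|_{L_2(\Om)}=1$, and $E_\Om(k_*)=0$, so $k_*$ is a rigid motion $k_*=a+b\times x$. Passing the boundary relations to the limit gives $k_*\cdot\bar n=0$ and $k_*\cdot\bar\tau_\al=0$ on $S_1$ and $k_{*1}=k_{*2}=0$ on $S_2$.

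The heart of the proof --- and the step I expect to be the main obstacle --- is to deduce $k_*\equiv0$ from these constraints, which is exactly where the non-axial-symmetry hypothesis enters. Rotations about the $x_3$-axis are tangent to the cylindrical wall $S_1$ only when the cross-section is a disc, so the assumption that $\Om$ is not axially symmetric eliminates that rotational degree of freedom in $b$; the Dirichlet conditions $k_{*1}=k_{*2}=0$ on $S_2$ remove the in-plane translations and the remaining rotations, and the friction traces $k_*\cdot\bar\tau_\al=0$ on $S_1$ (recall $\bar\tau_2|_{S_1}=\bar e_3$) eliminate the surviving $x_3$-translation. Hence $k_*=0$, contradicting $\|k_*\|_{L_2(\Om)}=1$. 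Combining this with the gradient bound of the second step gives $(\ref{k-h1})$. The algebraic identity and the compactness argument are standard; verifying that no nontrivial rigid motion is compatible with the full set of boundary relations, for a general non-axially-symmetric cross-section, is the delicate geometric point.
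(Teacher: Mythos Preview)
The paper does not supply a proof of this lemma at all: it is quoted verbatim from \cite{Z1} (note the citation attached to the \texttt{lemma} environment), and the text moves directly on to the reformulation (\ref{k1}) without any argument. So there is nothing in the present paper to compare your proposal against.

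That said, your outline is the standard route to such Korn-type inequalities and is essentially sound. Two small comments. First, in your Step~2 the boundary contribution on $S_2$ involves $k_3\,k_{3,x_3}$; controlling $\|k_3\|_{L_2(S_2)}$ by a trace estimate will in general reintroduce a term $\|k\|_{L_2(\Om)}^2$ on the right, so the clean gradient bound you state (with only $E_\Om(k)$, the tangential traces, and data on the right) is not quite available directly. This is harmless: one simply carries the extra $\|k\|_{L_2(\Om)}^2$ into the contradiction step, obtains a weak $H^1$ limit with $E_\Om(k_*)=0$ by lower semicontinuity, and proceeds as you do. Second, your identification of the role of non-axial-symmetry is slightly off in this particular setting. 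With the boundary relations you list --- $k_*\cdot\bar n=0$ and $k_*\cdot\bar\tau_\al=0$ on $S_1$ (so $k_*\equiv0$ on $S_1$), together with $k_{*1}=k_{*2}=0$ on $S_2$ --- the rigid motion is already forced to vanish regardless of the cross-section, since a rigid motion vanishing on a two-dimensional surface is identically zero. The non-axial-symmetry hypothesis originates in \cite{Z1}, where the corresponding Korn inequality is stated for functions whose tangential traces on $S_1$ are \emph{not} controlled (the pure slip case $\ga=0$); there it is genuinely needed to exclude the axial rotation. In the present formulation, with the $\ga$-term included on the right of (\ref{k-h1}), the hypothesis is inherited from \cite{Z1} but is in fact redundant for your argument.
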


Let us consider the problem (\ref{k}) in the form \ben \label{k1}
\bal k_{,t}-\Div \T(h,q)& = -v\cdot\nabla k-k\cdot
 \nabla v -v\cdot\nabla \tilde{h} -\tilde{h}\cdot
 \nabla v - \h,_t+g  \quad &{\rm in}\ \ \Omega^T,\\
\Div k &= 0\quad &{\rm in}\ \ \Omega^T,\\ \bar{n}\cdot k &= 0,\ \
\nu \bar n\cdot\D(h)\cdot\bar\tau_\alpha + \ga h
\cdot\bar\tau_\alpha =0,\ \
 \alpha=1,2\quad &{\rm on}\ \ S_1^T,\\
k_i &= 0,\ \ i=1,2,\ \ h_{3,x_3}= \De' d \quad &{\rm on}\ \ S_2^T,\\
k\big|_{t=0}&= h(0)-\h(0) \equiv k(0)\quad &{\rm in}\ \
\Omega.\eal \een Using function $w=u-\de$ we reformulate the $k$
problem into the following system \ben \label{k1-w} \bal
k_{,t}-\Div \T(h,q)& = -w\cdot\nabla k-k\cdot
 \nabla v &  \\  & - \de \cdot \nb k - v\cdot\nabla \tilde{h} -\tilde{h}\cdot
 \nabla v - \h,_t+g \equiv G \quad &{\rm in}\ \ \Omega^T,\\
\Div k &= 0\quad &{\rm in}\ \ \Omega^T,\\ \bar{n}\cdot k &= 0,\ \
\nu \bar n\cdot\D(h)\cdot\bar\tau_\alpha + \ga h
\cdot\bar\tau_\alpha =0,\ \
 \alpha=1,2\quad &{\rm on}\ \ S_1^T,\\
k_i &= 0,\ \ i=1,2,\ \ h_{3,x_3}= \De' d \quad &{\rm on}\ \ S_2^T,\\
k\big|_{t=0}&=  k(0)\quad &{\rm in}\ \ \Omega.\eal \een

Projecting $\Div k$ on $S_2$ we see that \benn \Div k|_{S_2} =
k_{3,x_3}|_{S_2} = 0 \eenn Then the second condition in
$(\ref{k1-w})_4$ takes the form \benn \h_{3,x_3} = \De' d \quad
{\rm on} \quad S_2 \eenn

\begin{lemma}
Let \benn  \La & = & D_1+ D_2 +\|f_3\|^2_{L_2(0,t;L_{4/3}(S_2))}
\\ & & +
\|g\|^2_{L_2(0,t;L_{6/5}(\Om))}+ \|h(0)\|^2_{L_2(\Om)}, \\
D_1 & = &  \|d_t\|^2_{L_2(0,t;H^1(S_2))} +
\|d_{,x'}\|^2_{L_2(0,t;H^1(S_2))} \\
D_2 & = &  \|d_{,x'}\|^2_{L_{\infty}(0,t;H^1(S_2))},
  \eenn
and \ben \label{H-def} H_1 = \|h\|^2_{L_2(\Om^t)}+
\|h\|^2_{L_{\infty}(0,t;L_3(\Om))}. \een Then the  solution $k$ to
the equivalent problems (\ref{k}, \ref{k1},\ref{k1-w}) satisfies
\ben \bal \label{k-Dcomp} \|k\|^2_{V_2^0(\Om^t)} +
\sum_{\al=1}^2\|k\cdot \bar{\tau}_{\al}\|^2_{L_2(S_1^t)} \\
\le c (\|d\|^6_{L_{\infty}(0,t;L_{3}(S_2))}+A^2+1)(\La^2 + H_1^2)
\eal \een
\end{lemma}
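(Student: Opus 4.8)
The plan is to prove the inequality by the energy method applied to the $k$-problem (\ref{k1-w}), testing the momentum equation with $k$ and using the coercivity furnished by the dilatation tensor together with the slip term on $S_1$. First I would multiply $(\ref{k1-w})_1$ by $k$ and integrate over $\Om$. The time-derivative term gives $\frac{1}{2}\frac{d}{dt}\|k\|^2_{L_2(\Om)}$, whose integration in time contributes $\|k(0)\|^2_{L_2(\Om)}\le c\|h(0)\|^2_{L_2(\Om)}+c\La$, which is part of $\La$. Integrating the stress term by parts and using $\Div k=0$ to annihilate the interior pressure yields the coercive quantity $\frac{\nu}{2}\Oi|\D(k)|^2\,dx=\frac{\nu}{2}E_{\Om}(k)$, a cross term $\frac{\nu}{2}\Oi\D(\h):\D(k)\,dx$ which after a Young inequality is bounded by a small multiple of $E_{\Om}(k)$ plus $c\|\h\|^2_{W^1_2(\Om)}$ (the latter entering $\La$ via Lemma~\ref{h-fala}), and the boundary integral $-\int_S\bar n\cdot\T(h,q)\cdot k\,dS$.

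The boundary integral is where the principal difficulty lies. On $S_1$, since $\bar n\cdot k=0$ and $\h=0$, the slip condition $(\ref{k1-w})_3$ converts it into $+\ga\sum_{\al=1}^2\|k\cdot\bar\tau_\al\|^2_{L_2(S_1)}$, i.e.\ precisely the second coercive term on the left-hand side. On $S_2$ the tangential components satisfy $k_1=k_2=0$ while $k_3=h_3$, and $h_{3,x_3}=\De'd$ fixes the viscous part of the normal stress, so this piece reduces to an integral of the form $\int_{S_2}(2\nu\De'd-q)h_3\,dS$. The contribution of $\De'd$ is controlled by $\|\De'd\|_{L_2(S_2)}\le c\|d_{,x'}\|_{H^1(S_2)}$ (entering $\La$) times a trace of $h_3$ absorbed after interpolation, whereas the pressure trace $\int_{S_2}q\,h_3\,dS$ has to be handled via the data on $S_2$ and the weak formulation of the problem; this is the most delicate step. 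I would then apply the Korn inequality (\ref{k-h1}) to replace $E_{\Om}(k)+\ga\sum_\al\|k\cdot\bar\tau_\al\|^2_{L_2(S_1)}$ by $\|k\|^2_{H^1(\Om)}$, up to the terms $\|\De'd\|_{L_2(S_2)}$ and $\|\h_3\|_{L_2(\Om)}$, which again feed into $\La$.

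Next I would estimate $\Oi G\cdot k\,dx$ term by term. The convective term $-\Oi(w\cdot\nb k)\cdot k\,dx$ vanishes because $\Div w=0$ and $w\cdot\bar n=0$ on $S$. The term $-\Oi(\de\cdot\nb k)\cdot k\,dx$ integrates to $-\frac{1}{2}\int_{S_2}(\de\cdot\bar n)|k|^2\,dS$; since on $S_2$ one has $|k|^2=h_3^2$ and $\de\cdot\bar n$ equals the outflow density on $S_2(a)$ (favourable sign) and $-d_1$ on $S_2(-a)$, the only harmful term is $\frac{1}{2}\int_{S_2(-a)}d_1h_3^2\,dS$. I would bound it by $\|d\|_{L_3(S_2)}\|h_3\|^2_{L_3(S_2)}$ and use the interpolated trace inequality $\|h_3\|_{L_3(S_2)}\le c\|h\|^{5/6}_{H^1(\Om)}\|h\|^{1/6}_{L_3(\Om)}$; a Young inequality with $\eps\sim\|d\|^{-1}_{L_3(S_2)}$ then produces a small multiple of $\|h\|^2_{H^1(\Om)}$ (absorbed, via $h=k+\h$, into $E_{\Om}(k)$ and $\La$) plus $c\|d\|^6_{L_3(S_2)}\|h\|^2_{L_3(\Om)}$, which after taking the supremum in $t$ gives exactly $\|d\|^6_{L_\infty(0,t;L_3(S_2))}H_1$. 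The term $-\Oi(k\cdot\nb v)\cdot k\,dx$ I would estimate by $\|\nb v\|_{L_2(\Om)}\|k\|^2_{L_4(\Om)}\le c\|\nb v\|_{L_2(\Om)}\|k\|_{L_3(\Om)}\|k\|_{H^1(\Om)}$; integrating in $t$, pulling $\sup_t\|k\|_{L_3(\Om)}$ out, using $\|\nb v\|_{L_2(\Om^t)}\le A$ and $\sup_t\|k\|^2_{L_3(\Om)}\le cH_1+c\La$ (as $k=h-\h$ and $\|h\|^2_{L_\infty(0,t;L_3(\Om))}$ is part of $H_1$), a Young inequality gives a small multiple of $\int_0^t\|\nb k\|^2_{L_2}\,d\tau$ plus $cA^2(H_1+\La)$. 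It is exactly here that the $L_\infty L_3$ content of $H_1$ is needed, and it is what keeps the estimate free of Gronwall's inequality. The remaining linear terms $-\Oi(v\cdot\nb\h)\cdot k$, $-\Oi(\h\cdot\nb v)\cdot k$, $-\Oi\h_{,t}\cdot k$ and $\Oi g\cdot k$ are disposed of by H\"older and Sobolev embeddings, using $\|v\|\le A$, the bounds for $\|\h\|_{W^1_2}$ and $\|\h_{,t}\|_{L_{6/5}}$ from Lemma~\ref{h-fala} (hence $\La$ through $D_1$), and $\|g\|_{L_2(0,t;L_{6/5}(\Om))}$ (part of $\La$), always absorbing a small multiple of $\|k\|_{H^1(\Om)}$.

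Finally I would collect the pieces: integrating in $t$ over $(0,t)$, I would control the lower-order part of $\int_0^t\|k\|^2_{H^1}\,d\tau$ by $\int_0^t\|k\|^2_{L_2}\,d\tau\le ct\sup_t\|k\|^2_{L_3}\le ct(H_1+\La)$, absorb all small multiples of $E_{\Om}(k)$, $\|k\cdot\bar\tau_\al\|^2_{L_2(S_1)}$ and $\int_0^t\|\nb k\|^2_{L_2}$ into the left-hand side, and bound the surviving source terms crudely (using $\La\le\La^2+1$, $H_1\le H_1^2+1$ and the extra unit in $D_0$) to obtain $\|k\|^2_{V_2^0(\Om^t)}+\sum_\al\|k\cdot\bar\tau_\al\|^2_{L_2(S_1^t)}\le cD_0(\La^2+H_1^2)$, which is the asserted bound since $D_0=\|d\|^6_{L_\infty(0,t;L_3(S_2))}+A^2+1$. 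The hardest part will be the $S_2$ boundary integral, namely separating and controlling the pressure trace $\int_{S_2}q\,h_3\,dS$ and extracting the precise power $\|d\|^6$ from the inflow term through the correct trace-interpolation balance; the convective estimate for $k\cdot\nb v$ is the other key point, being the place where the $L_\infty L_3$ norm contained in $H_1$ is used to avoid Gronwall's inequality.
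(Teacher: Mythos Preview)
Your overall strategy is the same as the paper's: test $(\ref{k1-w})_1$ with $k$, integrate by parts the stress term, apply the Korn inequality (\ref{k-h1}), estimate the right-hand side $G$ term by term, and integrate in time using $\|v\|_{V_2^0(\Om^t)}\le A$. Your handling of the $S_1$ boundary, the convective terms $w\cdot\nb k\cdot k$ and $k\cdot\nb v\cdot k$, and the linear source terms matches the paper essentially verbatim. There is one minor but legitimate difference: you treat $-\Oi(\de\cdot\nb k)\cdot k\,dx$ by integrating by parts to the boundary (using $\Div\de=0$, $\de\cdot\bar n|_{S_1}=0$), which produces an $\int_{S_2}d\,k_3^2\,dS$ term, whereas the paper instead splits $\de=b+\nb\va$ and absorbs this interior term entirely via the small parameter $\rho$ in the cut-off construction of $b$ (Lemma~\ref{h-fala} and the estimates quoted from \cite{RZ1}). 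Both routes work.

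The one genuine gap is exactly where you flag it: the pressure trace $\int_{S_2}q\,k_3\,dS$. You say it ``has to be handled via the data on $S_2$ and the weak formulation'' but give no mechanism. The paper's device is concrete and is the heart of the argument: take the third component of $(\ref{NS})_1$, restrict it to $S_2$, and use $v_3|_{S_2}=\pm d$, $v_{3,x_3}|_{S_2}=h_3$ to obtain the identity (\ref{proj}),
\[
-q\big|_{S_2}=d_t+v'\cdot\nb'd+v_3h_3-\nu\De'd-\nu\h_{3,x_3}-f_3 .
\]
Substituting this into $\int_{S_2}q\,k_3\,dS_2$ is what generates \emph{all} of the boundary data terms $\|d_t\|_{L_{4/3}(S_2)}$, $\|\De'd\|_{L_{4/3}(S_2)}$, $\|f_3\|_{L_{4/3}(S_2)}$ appearing in $\La$, the mixed term $\|v'\|_{L_4(S_2)}\|\nb'd\|_{L_2(S_2)}$, and the quadratic inflow term $\int_{S_2}d\,k_3^2\,dS_2$ (from $v_3h_3k_3=d\,k_3^2$) that after the trace interpolation $\|k_3\|_{L_3(S_2)}^2\le\eps^{1/6}\|\nb k_3\|_{L_2(\Om)}^2+c\eps^{-5/6}\|k_3\|_{L_2(\Om)}^2$ yields the factor $\|d\|_{L_3(S_2)}^6$. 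Without (\ref{proj}) you cannot explain why $\|f_3\|_{L_2(0,t;L_{4/3}(S_2))}$ is in $\La$ at all, and your $\|d\|^6$ derivation, while correct for the $\de$-term, misses its primary source. Once you insert (\ref{proj}) at this step, the rest of your argument goes through and coincides with the paper's.
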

\begin{proof}
We shall obtain the energy type estimate for solutions to problem
(\ref{k1-w}). Multiplying $(\ref{k1-w})_1$ by $k$ and integrating
over $\Om$ yields \ben \label{k=} \bal \frac{1}{2}\frac{d}{dt}
\int_{\Om} k^2 dx - \Oi \Div \T(h,q) k dx  = -\Oi w\cdot\nb k k dx
- \Oi k\nb v\cdot k dx  \\ -\Oi \de\cdot\nb k k dx - \Oi v \cdot
\nb\h k dx - \Oi \h\cdot \nb v k dx - \Oi\h_t k dx +\Oi gk dx.
\eal \een Now we examine the particular terms in (\ref{k=}).
Integrating by parts the second term on the l.h.s takes the form
\benn -\int_{S_1} \bar{n}\cdot \T(h,q)\cdot k dS_1 - \int_{S_2}
\bar{n}\cdot \T(h,q)\cdot k dS_2 + \frac{\ga}{2} \Oi \D(h)\D(k) dx
\equiv I_1 +I_2+I_3,\eenn where \benn I_1 = -\int_{S_1}
\bar{n}\cdot \T(h,q)\cdot\bar{\tau}_{\al}k\cdot \bar{\tau}_{\al}
dS_1 = \ga\int_{S_1}h\cdot\bar{\tau}_{\al}k\cdot \bar{\tau}_{\al}
dS_1 \\ = \ga\|k\cdot\bar{\tau}_{\al}\|^2_{L_2(S_1)}+ \ga
\int_{S_1}\h\cdot\bar{\tau}_{\al}k\cdot \bar{\tau}_{\al} dS_1, \\
I_2 = -\int_{S_2} \T_{nn}(h,q) k_n dS_2 = -\int_{S_2}(2\nu
h_{3,x_3} - q) k_n dS_2 = -\int_{S_2}(2\nu \h_{3,x_3} - q) k_n
dS_2 \\ = -2\nu \int_{S_2} \h_{3,x_3}k_n dS_2 - \int_{S_2(-a)} q
k_3 dS_2 + \int_{S_2(a)} q k_3 dS_2 \eenn To examine the last
integral we use the third component of $(\ref{NS})_1$ projected on
$S_2$ \ben \label{proj} d_t + v\cdot \nb v_3 -\nu \De' d -
\nu\h_{3,x_3} -f_3= -q \een Using this relation in the last term
of $I_2$ we obtain \benn \int_{S_2} qk_3dS_2 = \int_{S_2}(-d_t
+\nu \De' d+\nu\h_{3,x_3} +f_3)k_3 dS_2 - \int_{S_2} v'\nb' d k_3 dS_2
\\
+ \int_{S_2(-a)} dk_3^2dS_2 -\int_{S_2(a)} dk_3^2dS_2 \eenn where
$$ d|_{S_2(-a)} = -d_1, d_1>0, d|_{S_2(a)} = d_2 >0$$ and  in the first two terms we do not distinguish dependence on
$S_2(-a)$ and $S_2(a)$ because it does not have any influence on
estimations.

The first term in the above expression we estimate by \benn
\eps_1\|k_3\|^2_{L_4(S_2)} +
c(1/\eps_1)\left(\|d_t\|^2_{L_{4/3}(S_2)} + \|\De'
d\|^2_{L_{4/3}(S_2)}+\|f_3\|^2_{L_{4/3}(S_2)}\right), \eenn the
second term by \benn \eps_2\|k_3\|^2_{L_4(S_2)} + c(1/\eps_2)
\|v'\|^2_{L_4(S_2)} \|\nb' d\|^2_{L_2(S_2)} \eenn and the last one
as follows \benn \int_{S_2(-a)} d_1 k_3^2 dS_2 \le
\|d\|_{L_3(S_2)} \|k_3\|^2_{L_3(S_2)} \le \left(\eps^{1/6}  \| \nb
k_3\|^2_{L_2(\Om)} + c\eps^{-5/6}\|k_3\|^2_{L_2(\Om)} \right)
\cdot \|d\|_{L_3(S_2)}\\ \le \eps_2 \|\nb k_3\|^2_{L_2(\Om)} +
c\eps_2^{-5/6} \|d\|^6_{L_3(S_2)} \cdot (\|h_3\|^2_{L_2(\Om)} +
\|\h_3\|^2_{L_2(\Om)}). \eenn

Using the above estimates of the second term on the l.h.s. of
(\ref{k=}) and denoting the r.h.s. of (\ref{k1-w}) by $G$ we
obtain \benn \bal \frac{1}{2}\frac{d}{dt} \Oi k^2 dx +
\frac{\nu}{2} \Oi |\D(k)|^2 dx +\frac{\ga}{2} \int_{S_1}
|k\cdot\bar{\tau}_{\al}|^2 dS_1  \le \frac{\nu}{4}
\|k\|^2_{H^1(\Om)} + \frac{\ga}{2}\int_{S_1} |\h\cdot\bar{\tau}_{\al}|^2 dS_1 \\
+ c \Oi |\D(\h)|^2 dx  + c(\|d_t\|^2_{L_{4/3}(S_2)} + \|\De'
d\|^2_{L_{4/3}(S_2)}+\|f_3\|^2_{L_{4/3}(S_2)}) \\ +
\|d\|^6_{L_3(S_2)} \cdot (\|h_3\|^2_{L_2(\Om)} +
\|\h_3\|^2_{L_2(\Om)})  + |\Oi G h dx | \eal \eenn We apply the
Korn inequality (\ref{k-h1}) to conclude \ben \label{k-korn} \bal
\frac{d}{dt} \Oi k^2 dx + {\nu} \|k\|_{H^1(\Om)}^2 +\ga
\|k\cdot\bar{\tau}_{\al}\|_{L_2(S_1)}^2  \\ \le
c(\|d_t\|^2_{L_{4/3}(S_2)} + \|\De'
d\|^2_{L_{4/3}(S_2)}+\|f_3\|^2_{L_{4/3}(S_2)} +
\|\h\|^2_{H^1(\Om)} )
\\+ \|d\|^6_{L_3(S_2)} \cdot (\|h_3\|^2_{L_2(\Om)} +
\|\h_3\|^2_{L_2(\Om)})  + |\Oi G h dx |
\eal \een %
Now we shall examine the last term on the r.h.s. of the
inequality (\ref{k-korn}). To this end we use the r.h.s. of
(\ref{k=}). The first term on the r.h.s. of (\ref{k=}) vanishes.
The second term can be estimated either by \benn \int_{\Om}k^2|\nb
v| dx \le \eps_3 \|k\|^2_{L_6(\Om)} + c(1/\eps_3)\|\nb
v\|^2_{L_2(\Om)} \|k\|^2_{L_3(\Om)} \eenn or by \benn
\int_{\Om}k^2|\nb v| dx \le \eps_3 \|k\|^2_{L_6(\Om)} +
c(1/\eps_3)\|\nb v\|^2_{L_3(\Om)} \|k\|^2_{L_2(\Om)}. \eenn To
examine the third term on the r.h.s. of (\ref{k=}) we express it
in the form \benn \Oi b \nb k k dx + \Oi \nb \va \nb k k dx \equiv
I_1 +I_2 \eenn In view of \cite{RZ1} we have \benn |I_1| \le
c\rho^{1/6} \|\d\|_{H^1(\Om)} \|k\|^2_{H^1(\Om)} \eenn and \benn
|I_2| \le c (\eps_4 \rho^{\mu-2/3} \sup_{x_3} \|\d_3\|_{L_3(S_2)}
+\rho^{\mu+1/3}\sup_{x_3} \|\d_{,x_3}\|_{L_3(S_2)})
\|k\|^2_{H^1(\Om)} \eenn The fourth term we estimate by \benn
\eps_5 \|k\|_{L_6(\Om)}^2 + c(1/\eps_5) \|v\|_{L_6(\Om)}^2 \|\nb
\h\|^2_{L_{3/2}(\Om)} \eenn The fifth term we treat as follows
\benn \eps_6 \|k\|_{L_6(\Om)}^2 + c(1/\eps_6) \|\h\|_{L_3(\Om)}^2
\|\nb v\|^2_{L_{2}(\Om)},\eenn the sixth by \benn \eps_7
\|k\|_{L_6(\Om)}^2 + c(1/\eps_7) \|\h_t\|^2_{L_{6/5}(\Om)}
\eenn and finally the last one by \benn \eps_8 \|k\|_{L_6(\Om)}^2
+ c(1/\eps_8) \|g\|^2_{L_{6/5}(\Om)} \eenn Using the above
considerations in (\ref{k=}) and assuming that $\eps_1-\eps_8$ are
sufficiently small we obtain \ben \label{est-1} \bal \frac{d}{dt}
\Oi k^2 dx + \nu \|k\|_{H^1(\Om)}^2+ \ga
\sum_{\al=1}^2\|k\cdot\bar{\tau}_{\al}\|_{L_2(S_1)}^2 \\ \le
 c\bigg[\|\De'd\|^2_{L_{2}(S_2)} +\|d_t\|^2_{L_{4/3}(S_2)}
 +\|f_3\|^2_{L_{4/3}(S_2)}+ \|\h\|^2_{H^1(\Om)} \\ + \|d\|^6_{L_3(S_2)}
(\|h_3\|^2_{L_2(\Om)}
 +\|\h_3\|^2_{L_2(\Om)}) +\|\nb v\|^2_{L_2(\Om)}
(\|h_3\|^2_{L_3(\Om)}  + \|\h_3\|^2_{L_3(\Om)}) \\ +
\|v\|^2_{L_6(\Om)} \|\nb \h\|^2_{L_{3/2}(\Om)}+\|\h\|^2_{L_3(\Om)}
\|\nb v\|^2_{L_2(\Om)}+  \|h_t\|^2_{L_{6/5}(\Om)} +
\|g\|^2_{L_{6/5}(\Om)}\biggr] . \eal \een Integrating
(\ref{est-1}) with respect to time and using the energy type bound
of the form \benn \|v\|_{V^0_2(\Om^t)} \le {A},\ \  t\le T, \eenn
where $A$ is defined in Lemma~\ref{def-A} we get \ben \bal
\label{k-h} \|k\|^2_{V_2^0(\Om^t)} + \ga \sum_{\al=1}^2\|k\cdot
\bar{\tau}_{\al}\|^2_{L_2(S_1^t)} \le c
 \bigg[\|\De'd\|^2_{L_2(0,t;L_{2}(S_2))} +\|d_t\|^2_{L_2(0,t;L_{4/3}(S_2))} + \|f_3\|^2_{L_2(0,t;L_{4/3}(S_2))}
\\ +\|\h\|^2_{H^1(\Om^t)}
 +\|d\|^6_{L_{\infty}(0,t;L_{3}(S_2))}(
\|h_3\|^2_{L_2(\Om^t)} + \|\h_3\|^2_{L_2(\Om^t)})\bigg] \\+
\sup_{\tau} A^2 \|h\|^2_{L_3(\Om)}+ \sup_{\tau} A^2 \|\nb
\h\|^2_{L_{3/2}(\Om)} + \sup_{\tau} A^2 \|\h\|^2_{L_3(\Om)} +
\|\h_t\|^2_{L_2(\Om^t)} \\ + \|g\|^2_{L_2(0,t;L_{6/5}(\Om))}+
\|h(0)\|^2_{L_2(\Om)}+ \|\h(0)\|^2_{L_2(\Om)}. \eal \een In view
of Lemma~\ref{h-fala} we have \ben \bal \label{k-hA}
\|k\|^2_{V_2^0(\Om^t)} + \ga \sum_{\al=1}^2\|k\cdot
\bar{\tau}_{\al}\|^2_{L_2(S_1^t)} \le c
\bigg[\|d_{,x'}\|^2_{L_2(0,t;H^1(S_2))}
+\|d_t\|^2_{L_2(0,t;L_{4/3}(S_2))}
\\ \quad \quad \quad + \|f_3\|^2_{L_2(0,t;L_{4/3}(S_2))}
+\|d\|^6_{L_{\infty}(0,t;L_{3}(S_2))}\|h_3\|^2_{L_2(\Om^t)} \bigg]
\\+ A^2( \sup_{\tau} \|h\|^2_{L_3(\Om)}+
A^2\|d_{,x'}\|^2_{L_{\infty}(0,t;H^1(S_2))}) + A^2 \sup_{\tau}
\|d_{,x't}\|^2_{L_2(S_2)} \\+ \|g\|^2_{L_2(0,t;L_{6/5}(\Om))}+
\|h(0)\|^2_{L_2(\Om)}. \eal \een We set \benn D_1 =
\|d_t\|^2_{L_2(0,t;H^1(S_2))} +
\|d_{,x'}\|^2_{L_2(0,t;H^1(S_2))} \\
D_2 = \|d_{,x'}\|^2_{L_{\infty}(0,t;H^1(S_2))} \eenn Then
(\ref{k-hA}) takes the form \ben \bal \label{k-D}
\|k\|^2_{V_2^0(\Om^t)} + \ga \sum_{\al=1}^2\|k\cdot
\bar{\tau}_{\al}\|^2_{L_2(S_1^t)} \le c\biggl[ D_1^2
+\|f_3\|^2_{L_2(0,t;L_{4/3}(S_2))} \\
+(\|d\|^6_{L_{\infty}(0,t;L_{3}(S_2))}+A^2+1)\left(D_2^2
+\|h\|^2_{L_2(\Om^t)}+ \|h\|^2_{L_{\infty}(0,t;L_3(\Om))} \right)
\\ + \|g\|^2_{L_2(0,t;L_{6/5}(\Om))}+ \|h(0)\|^2_{L_2(\Om)}\biggr].
\eal \een Therefore, defining quantities $\La, D_1, D_2, H_1$ as
in (\ref{D-def}) and (\ref{H-def}) we can formulate the following
compact version of the last inequality. \benn
\|k\|^2_{V_2^0(\Om^t)} +
\sum_{\al=1}^2\|k\cdot \bar{\tau}_{\al}\|^2_{L_2(S_1^t)} \\
\le c (\|d\|^6_{L_{\infty}(0,t;L_{3}(S_2))}+A^2+1)(\La^2 + H_1^2)
\eenn and this concludes the proof. \end{proof}

Consequently, we can show the following inequality for $h$
\begin{lemma}
Let $D_0, \La$ are finite. Then a solutions $h$ of (\ref{h-q})
satisfies \ben \bal \label{h-v0} \|h\|^2_{V_2^0(\Om^t)} +
\sum_{\al=1}^2\|h\cdot \bar{\tau}_{\al}\|^2_{L_2(S_1^t)} \le c
D_0(\La^2 + H_1^2) \eal \een
 where \benn
 D_0 =\|d\|^6_{L_{\infty}(0,t;L_{3}(S_2))} +A^2 +1 \eenn
\end{lemma}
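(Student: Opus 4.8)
The plan is to transfer the estimate just obtained for $k$ to the original unknown $h$ through the decomposition $h=k+\h$ used to set up problem (\ref{k1-w}), where $\h$ is the divergence-free lifting of the Dirichlet data $-d_{,x_i}$ constructed in Lemma~\ref{h-fala}. Since $h=k+\h$ is an affine correction by the fixed function $\h$ that depends only on the inflow data $d$, proving (\ref{h-v0}) reduces to adding back the contribution of $\h$ in the $V_2^0(\Om^t)$ norm and on $S_1^t$, and then invoking the preceding lemma for the $k$ part.

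First I would split, by the triangle inequality,
\[
\|h\|^2_{V_2^0(\Om^t)}\le 2\|k\|^2_{V_2^0(\Om^t)}+2\|\h\|^2_{V_2^0(\Om^t)},
\]
and observe that on the lateral boundary $\h=0$ by $(\ref{-h})_2$. Consequently $h\cdot\bar\tau_\al=k\cdot\bar\tau_\al$ on $S_1^t$, so the friction terms satisfy $\sum_{\al=1}^2\|h\cdot\bar\tau_\al\|^2_{L_2(S_1^t)}=\sum_{\al=1}^2\|k\cdot\bar\tau_\al\|^2_{L_2(S_1^t)}$ exactly. Thus the boundary contribution requires no separate work and is delivered directly by the estimate (\ref{k-Dcomp}) of the previous lemma.

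For the interior $k$ part I would simply quote (\ref{k-Dcomp}), namely $\|k\|^2_{V_2^0(\Om^t)}+\sum_{\al=1}^2\|k\cdot\bar\tau_\al\|^2_{L_2(S_1^t)}\le c D_0(\La^2+H_1^2)$. It then remains only to control $\|\h\|^2_{V_2^0(\Om^t)}$. Taking $\si=2$ in Lemma~\ref{h-fala} gives, pointwise in $t$, $\|\h\|_{W^1_2(\Om)}\le c(\eps+\rho^a)\|d_{,x'}\|_{H^1(S_2)}$, which bounds both the ${\rm ess\,sup}_{t}\|\h\|_{L_2(\Om)}$ part and, after integration in time, the $\big(\int_0^t\|\nb\h\|^2_{L_2(\Om)}\,dt\big)^{1/2}$ part of the $V_2^0$ norm. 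This yields
\[
\|\h\|^2_{V_2^0(\Om^t)}\le c(\eps+\rho^a)^2\big(\|d_{,x'}\|^2_{L_\infty(0,t;H^1(S_2))}+\|d_{,x'}\|^2_{L_2(0,t;H^1(S_2))}\big)=c(\eps+\rho^a)^2(D_2+D_1),
\]
so the lifting contributes only the data summands $D_1,D_2$ of $\La$. Collecting the two parts and using $D_0\ge1$ produces (\ref{h-v0}).

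The step needing the most care is the final bookkeeping: $\h$ adds the linear combination $D_1+D_2\le\La$, while the right-hand side of (\ref{h-v0}) carries $\La^2$. One must therefore check that these data terms are genuinely absorbed — either through the factor $D_0\ge1$ together with the normalization implicit in the hypotheses, or by folding $c(D_1+D_2)$ into $cD_0\La^2$ at the price of enlarging the constant $c$. Apart from matching these homogeneities, the argument is purely the decomposition $h=k+\h$, the triangle inequality, and the vanishing of $\h$ on $S_1$, with no analytic input beyond Lemma~\ref{h-fala} and the established estimate for $k$.
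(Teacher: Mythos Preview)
Your proof is correct and follows essentially the same route as the paper: decompose $h=k+\h$, invoke (\ref{k-Dcomp}) for the $k$ part, and control $\h$ through Lemma~\ref{h-fala}. Regarding the bookkeeping issue you flag, the paper records the lifting estimate in the form $\|\h\|^2_{V_2^0(\Om^t)}+\sum_{\al}\|\h\cdot\bar\tau_\al\|^2_{L_2(S_1^t)}\le c(D_1^2+D_2^2)$, which is then absorbed directly into $c\La^2\le cD_0\La^2$ since $D_1,D_2\le\La$ and $D_0\ge1$.
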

\begin{proof}
Using (\ref{k-Dcomp}), $h=k+\h,$ and the estimate \benn
\|\h\|^2_{V_2^0(\Om^t)} + \sum_{\al=1}^2\|\h\cdot
\bar{\tau}_{\al}\|^2_{L_2(S_1^t)} \le c  (D_1^2 + D_2^2) \eenn we
conclude the result. \end{proof}
\begin{remark}
Since we want to admit arbitrary large flux, the inequality above
does not involve any small parameter that we could use in the
existence proof. Thus, we will need some more refined relation and
this can be achieved by improving the regularity through the
system for vorticity component $\chi$.

\end{remark}


\subsection{A priori estimates for vorticity component $\chi$}
We need to examine solutions of problem (\ref{v-chi}) and
consequently, (\ref{chi}). We have to deal with possibly non-zero
boundary condition of function $\chi$ on $S_1^T$ which we set as
$\chi_*$. To this end, let us introduce a function $\ci$ as a
solution to the following problem \ben \label{ci} \bal
\ci_{,t} - \nu\De\ci & = 0 \quad {\rm in} \ \ \Om^T, \\
\ci & = \chi_* \quad {\rm on} \ \ S_1^T, \\
\ci_{,x_3} & = 0,  \quad {\rm on} \ \ S_2^T, \\
\ci|_{t=0} & = \chi_0 \quad {\rm in} \ \ \Om.
\eal \een

To show the existence of such function we need the following
compatibility conditions \ben \bal \chi_{*,x_3} & =
 0, \ {\rm on} \ \bar{S}_1\cap \bar{S}_2\\
\chi_0|_{S_1} & = \chi_*|_{t=0}, \\
\chi_{0,x_3} & = 0 \ {\rm on} \  S_2. \eal \een where \benn
\chi_{*,x_3} = - \sum_{i,j=1}^2 \left[d_{,x_i} ( n_{i,x_j}
\tau_{1j} + \tau_{1i,x_j} n_j)+ \frac{\ga}{\nu} d_{,x_j}\tau_{1j}+
d_{x_i}\tau_{1i}(\tau_{12,x_1}- \tau_{11,x_2})\right] \eenn%
We note that $\chi_0$ depends of $v$ in a similar way as $\chi_*.$

Then, we consider the new function $\chi' = \chi - \ci$ which is a
solution to the following problem \ben \label{chi'} \bal
&\chi'_{,t}+v\cdot\nabla\chi'-h_3\chi'+h_2v_{3,x_1}-h_1v_{3,x_2}
 -\nu\Delta\chi'\\ & =F_3- v\cdot\nabla\ci+h_3\ci \quad &{\rm in}\ \ \Omega^T,\\
&\chi' = 0\quad &{\rm on}\ \ S_1^T,\\  &\chi'_{,x_3}=0\quad &{\rm
on}\ \ S_2^T,\\ &\chi'\big|_{t=0}=\chi(0)\quad &{\rm in}\ \
\Omega. \eal \een%

\begin{lemma}
Assume that $h\in L_{\infty}(0,t;L_3(\Om)), v'\in L_{\infty}(0,t;
H^1(\Om))\cap L_2(0,t;H^2(\Om)) \cap L_2(\Om;H^{1/2}(0,t)),
\chi(0)\in L_2(\Om), F_3 \in L_2(0,t;L_{6/5}(\Om)).$ Then
solutions to problem (\ref{chi}) satisfy the inequality \ben \bal
\label{chi-v} \|\chi\|_{V_2^0(\Om^t)} \le \va(c_2, A)\sup_t
\|h(t)\|_{L_3(\Om)} + \frac{1}{\eps} \va(c_2, c_3, A) \sup_t
\|h(t)\|^2_{L_3(\Om)} \\ + \frac{1}{\eps} \va(c_2, c_3, A)
+\frac{\ga c_2}{\nu} \|F_3\|_{L_2(0,t; H^2(\Om))} + \eps
(\|v'\|_{L_{\infty}(0,t;H^1(\Om))}\\ +
\|v'\|_{L_2(0,t;H^2(\Om))})+ c_3
\|v'\|_{L_2(\Om;H^{1/2}(0,t))} + \|\chi(0)\|_{L_2(\Om)} \\
\equiv c(H_1 + H_1^2)  + c \|v'\|_{L_2(\Om;H^{1/2}(0,t))} +
\eps(\|v'\|_{L_{\infty}(0,t;H^1(\Om))} +
\|v'\|_{L_2(0,t;H^2(\Om))}) + c \La_2, \eal \een where $\eps \in
(0,1), a>0, t\in (0,T), c_3$ is some constant, \benn \La_2 =
\|F_3\|_{L_2(0,t; H^2(\Om))}+ \|\chi(0)\|_{L_2(\Om)}+ 1 \eenn and
$\va$ is a generic function which changes its form from formula to
formula.
\end{lemma}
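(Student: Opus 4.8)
The plan is to remove the inhomogeneous Dirichlet datum $\chi_*$ on $S_1^T$ by means of the auxiliary function $\ci$ solving the homogeneous heat problem (\ref{ci}), whose solvability follows once the compatibility conditions stated after (\ref{ci}) are verified, and then to work with the difference $\chi'=\chi-\ci$, which satisfies (\ref{chi'}) with $\chi'|_{S_1^T}=0$ and $\chi'_{,x_3}|_{S_2^T}=0$. First I would test $(\ref{chi'})_1$ with $\chi'$ and integrate over $\Om$. The dissipative term gives $\nu\|\nb\chi'\|^2_{L_2(\Om)}$ with no boundary contribution, since $\chi'=0$ on $S_1$ annihilates the integral over $S_1$ and $\chi'_{,x_3}=0$ on $S_2$ annihilates the one over $S_2$; integrating the convective term by parts and using $\Div v=0$ together with $v\cdot\bar n|_{S_1}=0$ leaves only the surface term $\frac12\int_{S_2}(v\cdot\bar n)(\chi')^2\,dS_2$, weighted by $d$. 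This yields the identity
\benn
\frac12\frac{d}{dt}\|\chi'\|^2_{L_2(\Om)}+\nu\|\nb\chi'\|^2_{L_2(\Om)}
& = & \Oi h_3(\chi')^2 dx-\Oi(h_2 v_{3,x_1}-h_1 v_{3,x_2})\chi' dx \\
& & -\frac12\int_{S_2}(v\cdot\bar n)(\chi')^2 dS_2+\Oi(F_3-v\cdot\nb\ci+h_3\ci)\chi' dx.
\eenn

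Next I would estimate the right-hand side term by term. For the vortex–stretching contributions I use $\Oi h_3(\chi')^2\,dx\le\|h\|_{L_3(\Om)}\|\chi'\|^2_{L_3(\Om)}$ and $|\Oi(h_2v_{3,x_1}-h_1v_{3,x_2})\chi'\,dx|\le\|h\|_{L_3(\Om)}\|\nb v\|_{L_2(\Om)}\|\chi'\|_{L_6(\Om)}$; with the interpolation $\|\chi'\|^2_{L_3(\Om)}\le c\|\chi'\|_{L_2(\Om)}\|\chi'\|_{H^1(\Om)}$ and $H^1\hookrightarrow L_6$, Young's inequality splits each into an absorbable $\eps\|\chi'\|^2_{H^1(\Om)}$ plus lower-order terms carrying the coefficients $\|h\|^2_{L_3(\Om)}$, the stretching one being integrated in time against $\int_0^t\|\nb v\|^2_{L_2(\Om)}\,d\tau\le A^2$ from Lemma~\ref{def-A}. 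The boundary term is handled by the same anisotropic trace inequality used for $I_2$ in the previous lemma: $\int_{S_2}|v\cdot\bar n|(\chi')^2\,dS_2\le c\|d\|_{L_3(S_2)}\|\chi'\|^2_{L_3(S_2)}\le\eps\|\nb\chi'\|^2_{L_2(\Om)}+c\eps^{-5/6}\|d\|^6_{L_3(S_2)}\|\chi'\|^2_{L_2(\Om)}$. Finally the $F_3$ term is bounded in the $L_{6/5}$–$L_6$ duality by $\eps\|\chi'\|^2_{H^1(\Om)}+c\|F_3\|^2_{L_{6/5}(\Om)}$, the embedding $H^2\hookrightarrow L_{6/5}$ then placing it inside $\La_2$.

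It then remains to control $\ci$ through parabolic regularity for (\ref{ci}). Since $\chi_*$ is linear in the trace of $v$ on $S_1$ with smooth geometric coefficients, the boundary datum carries the norms of $v'$; estimating $\ci,\nb\ci$ in the appropriate parabolic spaces produces exactly the terms $\eps(\|v'\|_{L_\infty(0,t;H^1(\Om))}+\|v'\|_{L_2(0,t;H^2(\Om))})$ and $c\|v'\|_{L_2(\Om;H^{1/2}(0,t))}$, with the $\eps$ supplied by the absorption steps above. After integrating the energy inequality in time and applying Gronwall's lemma — the exponential factor generated by the $\|h\|^2_{L_3(\Om)}\|\chi'\|^2_{L_2(\Om)}$ and $\|d\|^6_{L_3(S_2)}\|\chi'\|^2_{L_2(\Om)}$ contributions being swallowed by the generic increasing function $\va$ — I obtain the bound for $\|\chi'\|_{V_2^0(\Om^t)}$. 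Reassembling $\chi=\chi'+\ci$, adding the corresponding $\ci$-estimate, and collecting the $\|h\|_{L_3}$-terms into $H_1+H_1^2$, the forcing and initial terms into $\La_2$, and the traces of $v$ into the listed $v'$-norms gives (\ref{chi-v}).

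The principal difficulty is the treatment of $\ci$: one must transfer the first-order trace expression $\chi_*$ in $v$ into space–time norms of $v'$ with small multiplicative constants, which requires sharp anisotropic parabolic trace estimates (the $H^{1/2}$-regularity in time being responsible for the term $\|v'\|_{L_2(\Om;H^{1/2}(0,t))}$). A secondary obstacle is the convective boundary integral on $S_2$: absorbing $\int_{S_2}|d|(\chi')^2\,dS_2$ into the dissipation without destroying the $\|d\|^6_{L_3(S_2)}$ structure needs the same trace inequality as for $I_2$, and keeping that $\|d\|^6_{L_3(S_2)}$ factor confined to the Gronwall exponent rather than as an unabsorbed coefficient is what ultimately makes the constant compatible with Condition~2.
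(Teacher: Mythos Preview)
Your overall strategy (subtract $\ci$, test $(\ref{chi'})_1$ with $\chi'$, then estimate $\ci$ via parabolic trace bounds for the datum $\chi_*$) matches the paper's, but your treatment of the cubic term $\Oi h_3(\chi')^2\,dx$ does not produce the stated inequality. You bound it by $\|h\|_{L_3(\Om)}\|\chi'\|^2_{L_3(\Om)}$, interpolate, and leave a remainder $c\|h\|^2_{L_3(\Om)}\|\chi'\|^2_{L_2(\Om)}$ for Gronwall. The resulting factor $\exp\big(c\int_0^t\|h\|^2_{L_3(\Om)}\,d\tau\big)$ cannot be ``swallowed by the generic function $\va$'': in the statement $\va=\va(c_2,c_3,A)$ depends only on constants and on the weak--solution bound $A$, not on $h$, and the right-hand side of (\ref{chi-v}) is $c(H_1+H_1^2)+\ldots$, i.e.\ explicitly polynomial in $\sup_t\|h(t)\|_{L_3(\Om)}$. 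An exponential dependence on $H_1$ here would also spoil the bootstrap in Lemma~\ref{3.7} and Lemma~\ref{A-lemma}, whose closure relies on this polynomial structure.

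The paper avoids Gronwall at this point by a different device: it writes $(\chi')^2=\chi'(\chi-\ci)$ and estimates
\[
\Big|\Oi h_3(\chi')^2\,dx\Big|\le \eps\|\chi'\|^2_{L_6(\Om)}+\frac{c}{\eps}\|h\|^2_{L_3(\Om)}\|\chi\|^2_{L_2(\Om)}+\frac{c}{\eps}\|h\|^2_{L_2(\Om)}\|\ci\|^2_{L_3(\Om)}.
\]
The key is that one factor is now $\chi=(\rot v)_3$, for which $\|\chi\|_{L_2(\Om)}\le c\|\nabla v\|_{L_2(\Om)}$; after integrating in time the middle term is bounded by $\sup_t\|h\|^2_{L_3(\Om)}\cdot\int_0^t\|\nabla v\|^2_{L_2(\Om)}\,d\tau\le A^2\sup_t\|h\|^2_{L_3(\Om)}$ directly from the weak--solution estimate (\ref{est-v}), and no differential inequality in $\|\chi'\|^2_{L_2(\Om)}$ remains. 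The same objection applies to your convective boundary term $\tfrac12\int_{S_2}|d|(\chi')^2\,dS_2$ (which the paper in fact drops from (\ref{chi-prim})): your treatment generates a further Gronwall factor $\exp\big(c\int_0^t\|d\|^6_{L_3(S_2)}\,d\tau\big)$ that is likewise absent from (\ref{chi-v}).
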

\begin{proof}
 We use the first equation of the system (\ref{chi'}): we multiply it by $\chi',$ integrate over $\Om$ and use boundary conditions that yields
 \ben \label{chi-prim} \bal
 \frac{1}{2} \frac{d}{dt} \|\chi'\|^2_{L_2(\Om)} + \nu \|\nb \chi'\|^2_{L_2(\Om)}= \Oi h_3 \chi'^2 dx - \Oi (h_2v_{3,x_1}-h_1v_{3,x_2}) \chi' dx +\Oi
 F_3\chi' dx \\ - \Oi v\cdot\nabla\ci\chi' dx +\Oi h_3\ci \chi' dx
 \eal \een
The first term on the r.h.s. of (\ref{chi-prim}) can be bounded by
\benn |\Oi h_3 \chi'^2 dx| = |\Oi h_3\chi'(\chi - \ci) dx| \le
|\Oi h_3 \chi'(\chi -\ci)dx| \le |\Oi h_3 \chi'\chi dx|+ |\Oi h_3
\chi'\ci dx| \\ \le \frac{\eps_1}{4}\|\chi'\|^2_{L_6(\Om)} +
\frac{1}{\eps_1}\|h_3\|^2_{L_3(\Om)}\|\chi\|^2_{L_2(\Om)}
+\frac{\eps_1}{4}\|\chi'\|^2_{L_6(\Om)} +
\frac{1}{\eps_1}\|h_3\|^2_{L_2(\Om)}\|\ci\|^2_{L_3(\Om)} \eenn the
second as follows \benn \frac{\eps_2}{2} \|\chi'\|^2_{L_6(\Om)} +
\frac{1}{2\eps_2} \|h\|^2_{L_3(\Om)} \|v_{3,x'}\|^2_{L_2(\Om)},
\eenn and the third one: \benn \frac{\eps_3}{2}
\|\chi'\|^2_{L_6(\Om)} + \frac{1}{2\eps_3}
\|F_3\|^2_{L_{6/5}(\Om)}. \eenn The fourth term on the r.h.s. of
(\ref{chi-prim}) we express in the form \benn \Oi v\cdot\nb
\chi'\ci dx \eenn and estimate as follows \benn \frac{\eps_4}{2}
\|\nb \chi'\|^2_{L_2(\Om)} + \frac{1}{2\eps_4} \|v\|^2_{L_6(\Om)}
\|\chi'\|^2_{L_3(\Om)}. \eenn Finally, the last term on the r.h.s.
of (\ref{chi-prim}) is bounded by \benn \frac{\eps_5}{2}
\|\chi'\|^2_{L_6(\Om)} + \frac{1}{2\eps_5} \|\chi'\|^2_{L_3(\Om)}.
\eenn Using the above estimates in (\ref{chi-prim}), setting
$\eps_1=\eps_2=\eps_3=\eps,$ $\eps_4=\frac{\nu}{2},$
$\eps=\frac{\nu}{8c_2}$ we obtain \benn
\frac{d}{dt}\|\chi'\|^2_{L_2(\Om)} + \nu \|\nb
\chi'\|^2_{L_2(\Om)} \le \frac{8c_2}{\nu} (\|\chi\|^2_{L_2(\Om)} +
\|v_{3,x'}\|^2_{L_2(\Om)}) \|h\|^2_{L_3(\Om)} \\ +
\left(\frac{2}{\nu} \|v\|^2_{L_6(\Om)} +
\frac{16c_2}{\nu}\|h\|^2_{L_2(\Om)}\right) \|\ci\|^2_{L_3(\Om)} +
\frac{8c_2}{\nu} \|F_3\|^2_{L_{6/5}(\Om)}. \eenn Integrating the
above inequality with respect to time and using estimate for the
weak solutions we can find the following \benn
\|\chi'\|^2_{L_2(\Om)}+ \nu \|\nb \chi'\|^2_{L_2(\Om^T)} \le
\frac{16c_2A^2}{\nu} \sup_t \|h(t)\|^2_{L_3(\Om)} + \frac{16c_2
+2}{\nu} A^2 \sup_t \|\ci\|^2_{L_3(\Om)}\\ + \frac{8c_2}{\nu}
\|F_3\|^2_{L_2(0,T;L_{6/5}(\Om))} + \|\chi(0)\|^2_{L_2(\Om)}.\eenn
Since $\chi = \chi'+\ci$ we obtain \ben \bal \label{chi-v2}
\|\chi\|^2_{V_2^0(\Om^T)}
 \le \frac{16c_2A^2}{\nu} \sup_t \|h(t)\|^2_{L_3(\Om)} + \frac{16c_2
+2}{\nu} A^2 \sup_t \|\ci\|^2_{L_3(\Om)} \\ + \frac{8c_2}{\nu}
\|F_3\|^2_{L_2(0,T;L_{6/5}(\Om))} + \|\chi(0)\|^2_{L_2(\Om)}.\eal
\een In view of Lemma and some interpolation inequalities we have
\ben \label{inter} \bal \|\ci\|_{L_2(0,T;L_2(\Om))} \le c'_1
\|v'\|_{L_2(0,T;L_2(S_1))}
 \le \eps \|v'\|_{L_2(0,T;H^1(\Om))} +c_2'\eps^{-1} A, \\
 \|\ci\|_{L_{\infty}(0,T;L_3(\Om))} \le c'_3
 \|v'\|_{L_{\infty}(0,T;L_3(S_1))}\le \eps^{1/6}
 \|v'\|_{L_{\infty}(0,T;H^1(\Om))}+c'_4 \eps^{-5/6} A, \\
 \|\ci\|_{L_{\infty}(0,T;L_2(\Om))} \le c'_5
 \|v'\|_{L_{\infty}(0,T;L_2(S_1))} \\ \le \eps^{1/2}
 \|v'\|_{L_{\infty}(0,T;H^1(\Om))}+c'_6 \eps^{-1/2} A, \\
\|\nb \ci\|_{L_2(0,T;L_2(\Om))} \le c'_7
\|v'\|_{W^{1/2,1/4}_2(S_1^T)} \le c'_8 \|v'\|_{W^{1,1/2}_2(\Om^T)}
\\ = c'_8 (\|v'\|_{L_2(0,T;H^1(\Om))} + \|v'\|_{L_2(\Om;H^{1/2}(0,T))})
\\ \le \eps^{1/2} \|v'\|_{L_2(0,T;H^2(\Om))} +\eps^{-1/2} c'_9A +
c'_8 \|v'\|_{L_2(\Om;H^{1/2}(0,T))}
 \eal \een where $c'_1, \ldots,c'_9$ are constants from corresponding
theorems and of imbedding. Let $c'_i \le c'_3, i=1,\ldots,9.$ Then
we use (\ref{chi-v2}) and (\ref{inter}) to obtain (\ref{chi-v}).
This concludes the proof.
\end{proof}

\subsection{(rot, div) problem and inequalities relating $v$ and $h$}

Let us consider the elliptic (rot, div) problem (\ref{v-chi}),
i.e. \benn
v_{1,x_2}-v_{2,x_1} = \chi\quad &{\rm in}\ \ \Omega',\\
v_{1,x_1}+v_{2,x_2} =  -h_3 \quad &{\rm in}\ \ \Omega',\\
v'\cdot\bar n' =0\quad &{\rm on}\ \ S'_1,  \eenn For solutions
$v'$ of this system, in view of inequalities for $h$ and $\chi$,
i.e. (\ref{h-v0}) and (\ref{chi-v}), we derive at \ben \bal
 \label{v'} \|v'\|^2_{V^1_2(\Om^T)} & \leq  c D_0 (\La + H_1) \\  & + c
\|v'\|_{L_2(\Om;H^{1/2}(0,t))} + c(H_1 + H_1^2) + c \La_2
 \eal \een %

Let us consider problem (\ref{NS}) written in the form of
Stokes-type system \ben \label{ns-new}
\bal &v_{,t}-\Div\T(v,p)=-v'\cdot\nabla v-wh +f, \quad &{\rm in}\ \Om^T \\
&\Div v=0, \quad & {\rm in}\ \Om^T \\ &v\cdot\bar n =0, \ \bar
n\cdot\D(v)\cdot\bar\tau_\al + \ga v \cdot\bar\tau_\al =0,\ \
 \alpha=1,2, \quad & {\rm on}\ S_1^T \\
&v\cdot\bar n =d, \ \bar n\cdot\D(v)\cdot\bar\tau_\al=0,\ \
 \alpha=1,2, \quad & {\rm on}\ S_1^T \\
&v|_{t=0}=v(0), \quad & {\rm in}\ \Om. \eal \een This yields the
next result
\begin{lemma} \label{3.7}
Let the assumptions of Lemmas~3.1-3.5 be satisfied. Let
\mbox{$h\in L_{\frac{10}{3}}(\Om^T),$} $f\in L_2(\Om^T), v(0)\in
H^1(\Om).$ Then for solutions of (\ref{ns-new}) we obtain the
inequality \ben \label{v-p} \bal \|v\|_{W^{2,1}_2(\Om^t)} + \|\nb
p\|_{L_2(\Om^t)} \leq cD_0^2 (H_1^2 + \La^2) + c(H_1 + H_1^2) \\ +
c\La_2^2 + c \La_3^2 + c\La_4 , \quad t\leq T, \eal \een where
$\La_3, \La_4$ are defined by (\ref{la3-4}).
\end{lemma}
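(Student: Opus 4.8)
The plan is to treat (\ref{ns-new}) as a linear Stokes-type problem with the right-hand side $F := -v'\cdot\nabla v - wh + f$ and inhomogeneous Dirichlet-slip boundary data $d$, and to apply the standard $W^{2,1}_2$ maximal-regularity (coercive) estimate for the Stokes system under the mixed slip/Dirichlet boundary conditions. That estimate gives
\benn
\|v\|_{W^{2,1}_2(\Om^t)} + \|\nb p\|_{L_2(\Om^t)}
\le c\big(\|F\|_{L_2(\Om^t)} + \|v(0)\|_{H^1(\Om)} + (\text{boundary norms of } d)\big),
\eenn
so the whole problem reduces to bounding $\|F\|_{L_2(\Om^t)}$ and absorbing the boundary contributions of $d$. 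The terms $\|v(0)\|_{H^1(\Om)}$ and the part of $\|f\|_{L_2(\Om^t)}$ are exactly what is collected in $\La_4$, while the boundary data of $d$ will be controlled by $\La_3$ via the trace/interpolation norm $\|v(0)\|_{W^{4/5}_{5/3}(\Om)}$ and $\|f\|_{L_{5/3}(\Om^t)}$ appearing in (\ref{la3-4}).

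The core of the argument is the estimate of the two nonlinear terms. First I would bound the convective term $v'\cdot\nabla v$ in $L_2(\Om^t)$ by a Hölder split, e.g.\ $\|v'\cdot\nabla v\|_{L_2(\Om)}\le \|v'\|_{L_6(\Om)}\|\nabla v\|_{L_3(\Om)}$, and then interpolate $\|\nabla v\|_{L_3(\Om)}$ between $\|v\|_{H^1}$ and $\|v\|_{W^{2,1}_2}$ so that the top-order factor can be absorbed into the left-hand side with a small constant (Young's inequality), leaving lower-order factors controlled by $\|v'\|_{V^1_2(\Om^T)}$ and hence, through (\ref{v'}) together with (\ref{h-v0}) and (\ref{chi-v}), by $D_0$, $\La$, $H_1$ and $\La_2$. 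Squaring (\ref{v'}) and inserting it is what produces the factor $cD_0^2(H_1^2+\La^2)$ and the $c(H_1+H_1^2)+c\La_2^2$ contributions. Second, the term $wh$ is estimated in $L_2(\Om^t)$ by $\|w\|_{L_5(\Om)}\|h\|_{L_{10/3}(\Om)}$ (Hölder with exponents $\tfrac25+\tfrac3{10}=\tfrac12$), using the hypothesis $h\in L_{10/3}(\Om^T)$ and the energy bound on $w$ (equivalently $v$) from Lemma~\ref{def-A}; this is where the assumption $h\in L_{10/3}(\Om^T)$ is exactly consumed.

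The main obstacle I anticipate is handling the $\|v'\|_{L_2(\Om;H^{1/2}(0,t))}$ term that appears on the right of (\ref{v'}): it is a fractional-in-time norm of the planar velocity and cannot simply be absorbed by the spatial elliptic estimate. The idea is that $\|v'\|_{L_2(\Om;H^{1/2}(0,t))}$ is itself controlled by the full parabolic norm $\|v\|_{W^{2,1}_2(\Om^t)}$ (since $W^{2,1}_2(\Om^t)\hookrightarrow L_2(\Om;H^{1/2}(0,t))$ by the anisotropic embedding used already in (\ref{inter})), so after inserting (\ref{v'}) into the bound for the convective term one gets a term $c\eps\|v\|_{W^{2,1}_2(\Om^t)}$ which, for $\eps$ small, is absorbed on the left. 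One must check that all the $\eps$-dependent constants accumulated through (\ref{chi-v}), (\ref{v'}) and the Stokes estimate can be chosen so that the total top-order coefficient stays below $1$; this bookkeeping of the interpolation constants is the delicate point. Once the absorption is justified, collecting the surviving terms and integrating in time gives precisely (\ref{v-p}) with the stated dependence on $D_0$, $H_1$, $\La$, $\La_2$, $\La_3$, $\La_4$.
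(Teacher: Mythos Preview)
Your one-step approach misses the paper's key structural device: a bootstrap through the intermediate space $W^{2,1}_{5/3}(\Om^t)$. The paper first applies $L_{5/3}$ Stokes maximal regularity to (\ref{ns-new}), where the nonlinear terms are controlled using only the energy bound $A$: one has $\|v'\cdot\nabla v\|_{L_{5/3}(\Om^t)}\le\|v'\|_{L_{10}}\|\nabla v\|_{L_2}\le cA\|v'\|_{V^1_2}$ and $\|wh\|_{L_{5/3}(\Om^t)}\le\|w\|_{L_{10/3}}\|h\|_{L_{10/3}}\le cA\|h\|_{L_{10/3}}$ (since $V^0_2\hookrightarrow L_{10/3}$). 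This step is precisely where $\La_3=\|f\|_{L_{5/3}(\Om^T)}+\|v(0)\|_{W^{4/5}_{5/3}(\Om)}$ enters, as the data term for the $W^{2,1}_{5/3}$ estimate; it has nothing to do with the boundary data $d$ as you suggest. The fractional-time norm $\|v'\|_{L_2(\Om;H^{1/2}(0,t))}$ from (\ref{v'}) is absorbed at this stage into $\|v\|_{W^{2,1}_{5/3}}$, giving the closed bound (\ref{v-k1}). Only then does the paper pass to $L_2$: now $\|v'\cdot\nabla v\|_{L_2}\le\|v'\|_{L_{10}}\|\nabla v\|_{L_{5/2}}\le c\|v'\|_{V^1_2}\|v\|_{W^{2,1}_{5/3}}$ and $\|wh\|_{L_2}\le\|w\|_{L_5}\|h\|_{L_{10/3}}\le c\|v\|_{W^{2,1}_{5/3}}\|h\|_{L_{10/3}}$, with every factor on the right already bounded by (\ref{v-k1}) and (\ref{v'}). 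No absorption into $\|v\|_{W^{2,1}_2}$ is needed, and squaring (\ref{v-k1}) is exactly what produces the combination $cD_0^2(H_1^2+\La^2)+c\La_2^2+c\La_3^2$ in (\ref{v-p}).

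Your scheme has a concrete gap at the $wh$ term: the energy estimate of Lemma~\ref{def-A} gives only $w\in V^0_2(\Om^t)\hookrightarrow L_{10/3}(\Om^t)$, not $L_5(\Om^t)$, so the bound $\|wh\|_{L_2}\le\|w\|_{L_5}\|h\|_{L_{10/3}}$ cannot be closed from the energy alone; the paper obtains $\|w\|_{L_5}$ from the embedding $W^{2,1}_{5/3}\hookrightarrow L_5$, which requires the intermediate step you omit. Even if you repair this with a different H\"older split, your route would not naturally produce the $\La_3^2$ term and hence would not establish (\ref{v-p}) in the stated form.
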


\begin{proof}
In view of \cite{Z2}, Lemma~3.7  we have that \benn
\|v'\|_{L_{10}(\Om^T)} \leq c\|v'\|_{V^1_2(\Om^T)}. \eenn Hence
\benn \|v'\nb v\|_{L_{5/3}(\Om^T)}& \leq &  c A
\|v'\|_{V^1_2(\Om^T)} , \\
\|wh\|_{L_{5/3}(\Om^T)} & \leq & cA \|h\|_{L_{10/3}(\Om^T)} .
\eenn In view of
the above estimates we have %
\ben \label{v-est}\bal \|v\|_{W^{2,1}_{5/3}(\Om^T)} \leq cA
(\|v'\|_{V^1_2(\Om^T)} + \|h\|_{L_{10/3}(\Om^T)})\\ +
c(\|f\|_{L_{5/3}(\Om^T)} + \|v(0)\|_{W_{5/3}^{4/5}(\Om)}). \eal
\een We calculate \benn \|v'\nb v\|_{L_2(\Om^T)} & \leq &
\|v'\|_{L_{10}(\Om^T)} \|\nb v\|_{L_{5/2}(\Om^T)} \\ & \leq &
\|v'\|_{V_{2}^1(\Om^T)} \|v\|_{W^{2,1}_{5/3}(\Om^T)}, \\
\|wh\|_{L_2(\Om^T)} & \leq &
\|w\|_{L_5(\Om^T)}\|h\|_{L_{10/3}(\Om^T)} \\ & \leq &
c\|v\|_{W^{2,1}_{5/3}(\Om^T)}\|h\|_{L_{10/3}(\Om^T)}. \eenn
Applying (\ref{v'}) in (\ref{v-est}) and using the interpolation
\benn \|v'\|_{L_2(0,T;H^{1/2}(\Om))} \leq \eps
\|v'\|_{W^{2,1}_{5/3}(\Om^T)} + c(1/\eps)A, \eenn we obtain \ben
\label{v-k1} \bal \|v\|_{W^{2,1}_{5/3}(\Om^T)} \leq cD_0(H_1 +
\La) + c(H_1 + H_1^2) + c \La_2 + c\La_3, \eal \een and \ben
\label{v-p1} \bal \|v\|_{W^{2,1}_2(\Om^T)} \leq cD_0^2 (H_1^2 +
\La^2) + c(H_1 + H_1^2) + c\La_2^2 + c \La_3^2 + c\La_4 \eal \een
 where %
\benn   \La_3 =  \|f\|_{L_{5/3}(\Om^T)} +
\|v(0)\|_{W_{5/3}^{4/5}(\Om)}, \\ \La_4 = \|f\|_{L_2(\Om^T)} +
\|v(0)\|_{H^1(\Om)} .  \eenn%
Applying the above estimates we conclude for solutions to problem
(\ref{ns-new}) the inequality (\ref{v-p}). This yields the thesis
of the lemma.
\end{proof}
Next, we need to find bound for $h$ in terms of $v$ with some
small parameter. We prove
\begin{lemma} \label{h-ex} Let the assumptions of Lemmas~3.1-3.5 be satisfied. Let
$\nb v \in L_3(\Om)$ and $D_0, \La$ are finite. Then a solution
$h$ of (\ref{h-q}) satisfies \ben \label{h-exp}
\|h\|^2_{V^0_2(\Om^t)} \le D_0 \exp \left[\int_0^t
(\|d\|^6_{L_3(S_2)} + \|\nb v\|^2_{L_3(\Om)})dt\right] \La^2. \een
\end{lemma}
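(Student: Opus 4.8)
The plan is to return to the differential energy inequality for $k=h-\h$ obtained in the proof of Lemma~3.3, but to make one decisive change: when estimating the convective contribution $-\Oi k\,\nb v\cdot k\,dx$ coming from the right-hand side of (\ref{k=}), I would use the \emph{second} of the two bounds offered there, namely
\benn
\Oi k^2|\nb v|\,dx & \le & \eps_3\,\|k\|^2_{L_6(\Om)} + c(1/\eps_3)\,\|\nb v\|^2_{L_3(\Om)}\,\|k\|^2_{L_2(\Om)},
\eenn
rather than the $\|\nb v\|^2_{L_2}\|k\|^2_{L_3}$ option. The point is that this places the quantity $\|k\|^2_{L_2(\Om)}$ — exactly what we wish to control — as a factor multiplied by the coefficient $\|\nb v\|^2_{L_3(\Om)}$, which is integrable in $t$. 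This sets up a Gronwall argument and breaks the circularity of Lemma~3.3, where $\|h\|$ reappeared on the right with no small factor.

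Next I would treat the boundary-flux term. In (\ref{est-1}) the product $\|d\|^6_{L_3(S_2)}(\|h_3\|^2_{L_2(\Om)}+\|\h_3\|^2_{L_2(\Om)})$ occurs; writing $h=k+\h$ and using $\|h_3\|^2_{L_2}\le 2\|k\|^2_{L_2}+2\|\h_3\|^2_{L_2}$ splits it into a Gronwall piece $c\,\|d\|^6_{L_3(S_2)}\|k\|^2_{L_2(\Om)}$ and a purely inhomogeneous remainder $c\,\|d\|^6_{L_3(S_2)}\|\h_3\|^2_{L_2(\Om)}$ controlled through Lemma~\ref{h-fala}. Absorbing the $\eps_i\|k\|^2_{L_6(\Om)}$ contributions into $\nu\|k\|^2_{H^1(\Om)}$ via the Korn inequality (\ref{k-h1}) and collecting the rest, I obtain
\benn
\frac{d}{dt}\|k\|^2_{L_2(\Om)} + \nu\|k\|^2_{H^1(\Om)} + \ga\sum_{\al=1}^2\|k\cdot\bar{\tau}_{\al}\|^2_{L_2(S_1)} & \le & a(t)\,\|k\|^2_{L_2(\Om)} + B(t),
\eenn
where $a(t)=c(\|d\|^6_{L_3(S_2)}+\|\nb v\|^2_{L_3(\Om)})$ and $B(t)$ gathers the data terms ($\|d_t\|^2_{L_{4/3}(S_2)}$, $\|\De'd\|^2_{L_{4/3}(S_2)}$, $\|f_3\|^2_{L_{4/3}(S_2)}$, $\|\h\|^2_{H^1(\Om)}$, $\|g\|^2_{L_{6/5}(\Om)}$) together with the $v$-dependent but $k$-free products $\|v\|^2_{L_6(\Om)}\|\nb\h\|^2_{L_{3/2}(\Om)}$ and $\|\h\|^2_{L_3(\Om)}\|\nb v\|^2_{L_2(\Om)}$.

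I would then apply Gronwall's inequality. Integration of the $L_2$-part yields the pointwise bound $\|k(t)\|^2_{L_2(\Om)}\le e^{\int_0^t a}\,(\|k(0)\|^2_{L_2(\Om)}+\int_0^t B)$, hence a bound for $\sup_\tau\|k\|^2_{L_2}$; reinserting this into the time-integral of the full inequality controls $\int_0^t a\,\|k\|^2_{L_2}\le \sup_\tau\|k\|^2_{L_2}\int_0^t a$ and thereby the whole $V^0_2$ norm. Using the weak-solution bound $A$ (contained in $D_0$) on the $v$-dependent products, Lemma~\ref{h-fala} on the $\h$-terms, and $D_0\ge 1$, one checks $\int_0^t B\le c\,D_0\La^2$ and $\|k(0)\|^2_{L_2}\le c\,D_0\La^2$, so that
\benn
\|k\|^2_{V^0_2(\Om^t)} & \le & c\,D_0\,\La^2\,\exp\left[c\int_0^t(\|d\|^6_{L_3(S_2)}+\|\nb v\|^2_{L_3(\Om)})\,dt\right].
\eenn
Finally, passing from $k$ back to $h=k+\h$ and absorbing $\|\h\|^2_{V^0_2(\Om^t)}\le c(D_1^2+D_2^2)\le c\La^2$ via Lemma~\ref{h-fala}, together with the exponential being $\ge 1$, yields (\ref{h-exp}).

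The main obstacle is conceptual rather than computational: recognizing that the non-closing estimate of Lemma~3.3 can be salvaged by routing $\|k\|^2_{L_2(\Om)}$ into a Gronwall coefficient. This succeeds precisely because $\|d\|^6_{L_3(S_2)}$ and $\|\nb v\|^2_{L_3(\Om)}$ are integrable in $t$ under the standing hypotheses — the former since $d\in L_\infty(0,t;L_3(S_2))$ gives $\int_0^t\|d\|^6_{L_3(S_2)}\le t\sup_\tau\|d\|^6_{L_3(S_2)}$, the latter in the regularity class under consideration — so the exponential factor is finite. The price paid is an exponential-in-data dependence, which is acceptable here since no smallness of the flux is assumed; the constant in the exponent is harmless after the conventional bookkeeping.
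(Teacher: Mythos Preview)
Your proposal is correct and follows essentially the same route as the paper: return to the differential energy inequality for $k$ from the proof of Lemma~3.3, replace the $\|\nb v\|_{L_2}^2\|k\|_{L_3}^2$ estimate by the alternative $\|\nb v\|_{L_3}^2\|k\|_{L_2}^2$, split the $\|d\|^6_{L_3(S_2)}$-term via $h=k+\h$, apply Gronwall, and pass back from $k$ to $h$ using Lemma~\ref{h-fala}. Your write-up is in fact more explicit about the Gronwall mechanism than the paper's own terse argument.
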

\begin{proof}
To this
end we consider again the inequality \benn
 \frac{d}{dt} \Oi k^2 dx + {\nu} \|k\|_{H^1(\Om)}^2 +\ga
\|k\cdot\bar{\tau}_{\al}\|_{L_2(S_1)}^2
\\ \le c(\|d_t\|^2_{L_{4/3}(S_2)} + \|\De'
d\|^2_{L_{4/3}(S_2)}+\|f_3\|^2_{L_{4/3}(S_2)} +
\|\h\|^2_{H^1(\Om)} )
\\+ \|d\|^6_{L_3(S_2)} \|k\|^2_{L_2(\Om)} +
|\Oi G h dx |
\eenn %
 and to examine the last term on the r.h.s. we use the (\ref{k=})
 and the relation
 \benn \Oi |\nb v| k^2 dx \le \eps_3 \|k\|^2_{L_6(\Om)} +
c(1/\eps_3)\|\nb v\|^2_{L_3(\Om)} \|k\|^2_{L_2(\Om)}. \eenn We
deal with inequalities above and through such modifications, we
have \benn\|k\|^2_{V_2^0(\Om^t)} + \ga \sum_{\al=1}^2\|k\cdot
\bar{\tau}_{\al}\|^2_{L_2(S_1^t)} \le c \bigg[\|\nb
v\|^2_{L_2(0,t;L_3(\Om))}+\|d\|^6_{L_{\infty}(0,t;L_{3}(S_2))}\bigg]
\|k\|^2_{L_2(\Om^t)}
\\ + \|d_{,x'}\|^2_{L_2(0,t;H^1(S_2))}
+\|d_t\|^2_{L_2(0,t;L_{4/3}(S_2))} +
\|f_3\|^2_{L_2(0,t;L_{4/3}(S_2))} \\ +
A^2\|d_{,x'}\|^2_{L_{\infty}(0,t;H^1(S_2))}) + A^2 \sup_{\tau}
\|d_{,x't}\|^2_{L_2(S_2)} + \|g\|^2_{L_2(0,t;L_{6/5}(\Om))}+
\|h(0)\|^2_{L_2(\Om)}. \eenn Then, instead of (\ref{h-v0}) we can
obtain \benn
 \|k\|^2_{V^0_2(\Om^t)} \le D_0
\exp \left[\int_0^t (\|d\|^6_{L_3(S_2)} + \|\nb
v\|^2_{L_3(\Om)})dt\right] \La^2 \eenn and using
Lemma~\ref{h-fala} yields (\ref{h-exp}).
\end{proof}

\begin{lemma} \label{A-lemma} Let the Conditions 1 and 2 hold. Then
for a weak solution $v$ to (\ref{NS}) and $h=v,_{x_3}$ - a
solution to (\ref{ns-new}) there exists such constant $\mathcal A$
that \ben \label{h-A} \bal {\bf H}(T)
& \equiv \|h\|_{W_2^{2,1}(\Om^T)} \le \mathcal A \\
{\rm and}  &  \\  \|v\|_{W^{2,1}_2(\Om^T)} & \leq cD_0^2
(\mathcal{A}^2 + \La^2) + c(\mathcal{A} + \mathcal{A}^2)  +
c\La_2^2 + c \La_3^2 + c\La_4 \eal \een
\end{lemma}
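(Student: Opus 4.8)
The plan is to upgrade the energy control of $h$ to a full $W^{2,1}_2$ bound and then use Condition~2 to absorb the resulting self-interaction. Since $h=v_{,x_3}$ solves the Stokes-type system (\ref{h-q}), which has the same elliptic/parabolic structure as (\ref{ns-new}), the linear maximal-regularity estimate of \cite{Z2} invoked in Lemma~\ref{3.7} applies and gives
\ben
{\bf H}(T)^2 = \|h\|^2_{W^{2,1}_2(\Om^T)} &\le& c\|v\cdot\nb h + h\cdot\nb v\|^2_{L_2(\Om^T)} \nonumber \\
&& +\, c\|g\|^2_{L_2(\Om^T)} + c\,\Theta + c\|h(0)\|^2_{H^1(\Om)}, \label{hmax}
\een
where $g=f_{,x_3}$ and $\Theta$ gathers the boundary data $d_{,x'},\De'd$ from $(\ref{h-q})_{3,4}$ and obeys $\Theta\le c\La$. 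In contrast with the energy inequality, here $g$ enters through its full $L_2(\Om^T)$ norm and $h(0)$ through its $H^1(\Om)$ norm --- precisely the two quantities whose smallness is assumed in Condition~2.

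To treat the advective term I would use H\"older together with the space--time embeddings of \cite{Z2}, writing each product as a $v$-factor, controlled by the bounds (\ref{v-k1})--(\ref{v-p1}) of Lemma~\ref{3.7}, times an $h$-factor; the $h$-factor is then split by interpolation as $\eps\|h\|_{W^{2,1}_2(\Om^T)}+c_\eps\|h\|_{V_2^0(\Om^T)}$. With $\eps$ chosen small relative to the (a priori) value of $\|v\|_{W^{2,1}_2(\Om^T)}$, the contribution $\eps\|v\|_{W^{2,1}_2}\|h\|_{W^{2,1}_2}$ is absorbed into the left-hand side of (\ref{hmax}), leaving products of $v$-norms with $\|h\|_{V_2^0(\Om^T)}$. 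For the latter I would use the exponential estimate (\ref{h-exp}) of Lemma~\ref{h-ex}, i.e.
\benn
\|h\|_{V_2^0(\Om^T)} \le D_0^{1/2}\exp\Big[\tfrac{1}{2}\int_0^T(\|d\|^6_{L_3(S_2)}+\|\nb v\|^2_{L_3(\Om)})\,dt\Big]\La .
\eenn

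Inserting (\ref{v-k1})--(\ref{v-p1}) --- in which $H_1$ is dominated by the $W^{2,1}_2$ norm of $h$, hence by ${\bf H}(T)$ --- to bound the prefactor $\|v\|_{W^{2,1}_{5/3}(\Om^T)}$ and, by the same $W^{2,1}$-norms of $v$, the exponent $\int_0^T\|\nb v\|^2_{L_3(\Om)}dt$, turns (\ref{hmax}) into
\benn
{\bf H}(T)^2 & \le & cD_0^2\La^2\big[({\bf H}(T)+\La) + c({\bf H}(T)+{\bf H}(T)^2) + c\La_2 + c\La_3\big] \\
& & \times\,\exp\Big[\int_0^T\|d\|^6_{L_3(S_2)}\,dt + cD_0({\bf H}(T)+\La) \\
& & +\,c({\bf H}(T)+{\bf H}(T)^2) + c\La_2 + c\La_3\Big] \\
& & +\,\|g\|^2_{L_2(\Om^T)} + \|h(0)\|^2_{H^1(\Om)},
\eenn
which is exactly the relation of Condition~2 with ${\bf H}(T)$ in the role of $\mathcal{A}$. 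As its right-hand side is increasing in ${\bf H}(T)$ and Condition~2 furnishes a constant $\mathcal{A}$ for which that right-hand side is $\le\mathcal{A}^2$, a continuity argument in $T$ --- using that $T\mapsto{\bf H}(T)$ is continuous and that ${\bf H}(0)$ is governed by the small datum $\|h(0)\|_{H^1(\Om)}$ --- shows ${\bf H}(t)$ cannot reach $\mathcal{A}$ on $[0,T]$, so ${\bf H}(T)\le\mathcal{A}$. Feeding $H_1\le c\mathcal{A}$ into (\ref{v-p1}) then yields the stated bound for $\|v\|_{W^{2,1}_2(\Om^T)}$.

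I expect the main obstacle to be the self-referential, exponentially amplified coupling between $h$ and $v$: the regularity of $h$ depends on $v$ through (\ref{v-k1})--(\ref{v-p1}), while the bound for $v$ depends on $h$ through $H_1$, and the Gronwall factor in (\ref{h-exp}) converts this feedback into an exponential of the very norms being estimated. The delicate point is to arrange the H\"older splittings so that every $h$-factor carrying the exponential is the small $V_2^0$-factor (of size $O(\La)$) while the large norms are thrown onto $v$, and then to match the prefactor and exponent verbatim to Condition~2; it is exactly the smallness of $\La$, $\|g\|_{L_2(\Om^T)}$ and $\|h(0)\|_{H^1(\Om)}$ that prevents the exponential from breaking the closure.
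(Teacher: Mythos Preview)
Your proposal is correct and follows essentially the same route as the paper: combine the maximal $W^{2,1}_2$-regularity for the Stokes problem satisfied by $h$, the exponential $V^0_2$-bound (\ref{h-exp}) from Lemma~\ref{h-ex}, and the $v$-estimates (\ref{v-k1})--(\ref{v-p1}) of Lemma~\ref{3.7} (together with $H_1\le c\,{\bf H}(T)$) to obtain a self-referential inequality for ${\bf H}(T)$ that matches Condition~2, and then close via the smallness of $\La,\ \|g\|_{L_2(\Om^T)},\ \|h(0)\|_{H^1(\Om)}$. You make explicit two steps the paper leaves implicit --- the maximal-regularity estimate (\ref{hmax}) for $h$ and the continuity argument in $T$ --- which is fine; just be aware that in your absorption step the choice of $\eps$ depends on the (a priori) bound for $\|v\|_{W^{2,1}_2}$, so the resulting constants depend on $\mathcal A$, and it is precisely the smallness of $\La$ in Condition~2 that keeps this harmless.
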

\begin{proof}
We note that \benn \|\nb v\|_{L_2(0,T;L_3(\Om))} \le c
\|v\|_{W_{5/3}^{2,1}(\Om^{T})} \le  cD_0(H_1 + \La) + c(H_1 +
H_1^2) + c \La_2 + c\La_3 \eenn %
where we used (\ref{v-k1}). Applying also the inequality
(\ref{h-exp}) from Lemma~\ref{h-ex} we have \benn
\|h\|^2_{L_2(\Om^T)} \le D_0 \exp \left[\int_0^t
(\|d\|^6_{L_3(S_2)}dt + cD_0(H_1 + \La) + c(H_1 + H_1^2) + c \La_2
+ c\La_3 \right]\La^2 \eenn Since \benn H_1(T) \le c {\bf H}(T)
\eenn we obtain, by formulas above \benn \bal {\bf H}^2(T) \le c
D_0^2 \La^2\left[({\bf H}(T) + \La) + c({\bf H}(T) + {\bf H}(T)^2)
+ c \La_2 + c\La_3 \right] \\ \cdot \exp \left[\int_0^t
(\|d\|^6_{L_3(S_2)}dt + cD_0({\bf H}(T) + \La) + c({\bf H}(T) +
{\bf H}(T)^2) + c \La_2 + c\La_3 \right] \\ + \|g\|^2_{L_2(\Om^T)}
+ \|h(0)\|^2_{H^1(\Om)} \eal \eenn To obtain the result, we need
\ben \bal  c D_0^2 \La^2 \left[({\mathcal A} + \La) +
c({\mathcal A} + {\mathcal A}^2) + c \La_2 + c\La_3 \right] \\
\exp \left[\int_0^t (\|d\|^6_{L_3(S_2)}dt + cD_0({\mathcal A} +
\La) + c({\mathcal A} + {\mathcal A}^2) + c \La_2 + c\La_3 \right]
\\ + \|g\|^2_{L_2(\Om^T)} + \|h(0)\|^2_{H^1(\Om)}\le {\mathcal A}^2 \eal
\een which can be satisfied for sufficiently small
$\|g\|_{L_2(\Om^T)},\|h(0)\|_{H^1(\Om)}$ and
$$\La = D_1+ D_2
+\|f_3\|^2_{L_2(0,t;L_{4/3}(S_2))} +
\|g\|^2_{L_2(0,t;L_{6/5}(\Om))}+ \|h(0)\|^2_{L_2(\Om)},$$ where \benn D_1 & = & \|d_t\|^2_{L_2(0,t;H^1(S_2))} + \|d_{,x'}\|^2_{L_2(0,t;H^1(S_2))}, \\
D_2 & = &  \|d_{,x'}\|^2_{L_{\infty}(0,t;H^1(S_2))}. \eenn

To obtain the bound for $v$ in terms of $\mathcal{A}$ we use
(\ref{v-p1}) and $(\ref{h-A})_1$. This concludes the proof.
\end{proof}




\section{Existence}
\setcounter{equation}{0} %
\setcounter{lemma}{0}

\begin{lemma}
Assume that Conditions~1 and 2 are satisfied. Then the solution
$(v,p)$ to (\ref{NS}) satisfying (\ref{main}) exists on $(0,T)$
for some $T>0.$
\end{lemma}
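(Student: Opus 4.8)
The plan is to establish existence of a regular solution $(v,p)$ to the nonlinear problem (\ref{NS}) by the Leray--Schauder fixed point method, using the a~priori bounds already assembled in Lemmas~3.1--\ref{A-lemma} as the crucial compactness and boundedness ingredient. The key point is that under Conditions~1 and~2, Lemma~\ref{A-lemma} furnishes a constant $\mathcal A$ controlling $\|h\|_{W^{2,1}_2(\Om^T)}$ and hence, via (\ref{v-p1}), a bound for $\|v\|_{W^{2,1}_2(\Om^T)}$ together with $\|\nb p\|_{L_2(\Om^T)}$ and $\|\nb p_{,x_3}\|_{L_2(\Om^T)}$; all of these are expressed through the increasing function $\va(\mathcal A, D_0, \La, \La_2, \La_3, \La_4)$ appearing in (\ref{main}). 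The whole difficulty of the estimates has been absorbed into the earlier lemmas, so the existence argument itself is a relatively standard topological fixed-point scheme wrapped around those bounds.

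Concretely, I would first define a map $\mathcal T$ on a suitable Banach space --- the natural choice is the space of velocity fields $\bar v$ lying in a ball of $V^1_2(\Om^T)$ (or $W^{2,1}_{5/3}(\Om^T)$, matching the regularity at which the linear theory closes) --- by freezing the transport and lower-order coefficients at $\bar v$ and solving the resulting \emph{linear} Stokes-type system (\ref{ns-new}) for $v$. Existence, uniqueness and the $W^{2,1}_2$-regularity of the solution of this linear problem, for the nonhomogeneous slip/Dirichlet boundary data, is supplied by the cited linear theory (\cite{Z2}); this makes $\mathcal T$ well defined. Compactness of $\mathcal T$ follows because the solution gains regularity relative to the data: $v\in W^{2,1}_2(\Om^T)$ embeds compactly into the space in which $\bar v$ varies, by Aubin--Lions type arguments together with the parabolic embeddings used repeatedly above. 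Continuity of $\mathcal T$ is checked by estimating differences of solutions to the linear problem in the weaker norm.

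The heart of the Leray--Schauder argument is the a~priori bound on the set of fixed points of $\tau\mathcal T$, $\tau\in[0,1]$: any $v$ solving the $\tau$-scaled problem must satisfy the same structural inequalities that led to (\ref{h-A}) and (\ref{v-p1}), so $\|h\|_{W^{2,1}_2(\Om^T)}\le\mathcal A$ and $\|v\|_{W^{2,1}_2(\Om^T)}\le\va(\mathcal A,D_0,\La,\La_2,\La_3,\La_4)$ hold uniformly in $\tau$. This is exactly where Condition~2 is indispensable: it is precisely the solvability hypothesis guaranteeing that the self-consistent inequality for ${\bf H}(T)$ in Lemma~\ref{A-lemma} admits the bound $\mathcal A$, and hence that the fixed-point set stays inside a fixed ball independent of $\tau$. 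The Leray--Schauder theorem then yields a fixed point at $\tau=1$, which is the desired solution; recovering $\nb p$ and $\nb p_{,x_3}$ from the momentum equation and $(\ref{h-q})$ completes the regularity claims of (\ref{main}).

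The step I expect to be the main obstacle is \emph{not} the a~priori estimate --- that work is finished in the preceding section --- but rather verifying the hypotheses of the fixed-point theorem in a consistent functional setting: one must choose the ball and the topology so that (i) the linear solver lands back in the ball, (ii) $\mathcal T$ is genuinely compact and continuous, and (iii) the frozen-coefficient linearization of (\ref{ns-new}) preserves the divergence constraint and the inhomogeneous boundary conditions well enough for the linear $W^{2,1}_2$-theory to apply. Reconciling the mixed regularity (the $L_{5/3}$ versus $L_2$ estimates, and the auxiliary role of $h=v_{,x_3}$, which must be controlled simultaneously with $v$) inside a single fixed-point map is the delicate bookkeeping that carries the real risk, and I would handle it by fixing the map on the pair $(v,h)$ or, equivalently, by treating $h$ as determined from $v$ through its own linear problem (\ref{h-q}) and checking the compatibility of the two bounds.
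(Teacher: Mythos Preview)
Your plan is correct and follows essentially the same route as the paper: a Leray--Schauder fixed-point argument in which the a~priori bounds of Lemma~\ref{A-lemma} (secured by Condition~2) control the fixed-point set uniformly in the homotopy parameter, while a linear Stokes solver supplies the regularity gain needed for compactness. The paper makes the bookkeeping you flag as delicate explicit by working with the \emph{pair} $(v,h)$ from the outset --- defining $\Phi=(\Phi_1,\Phi_2)$ via the two $\lambda$-parametrized linear problems (\ref{NS-lambda}) and (\ref{h-q-lambda}) --- and by choosing the concrete space ${\mathcal M}(\Om^T)=L_{2r}(0,T;W^1_{6\eta/(3+\eta)}(\Om))$ (ultimately $r=\eta=2$, i.e.\ $L_4(0,T;W^1_{12/5}(\Om))$), so that $W^{2,1}_2(\Om^T)\hookrightarrow{\mathcal M}(\Om^T)$ is compact; this is precisely the ``consistent functional setting'' you anticipated as the main technical point.
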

\begin{proof}
To prove the existence of solutions to problem (\ref{NS}) we will
use the Leray-Schauder theorem. To this end, we construct the
mappings

\ben \label{NS-lambda} \bal %
&v_{t}-\Div \T(v,p)=- \la \v\cdot\nb \v+ f\quad &{\rm in}\
 \Om^T=\Om\times(0,T),\\
&\Div v=0\quad &{\rm in}\  \Om^T,\\
 &v\cdot\bar n=0\quad &{\rm
on}\  S_1^T ,\\
&\nu \bar n\cdot\D(v)\cdot\bar\tau_\alpha + \ga v \cdot
\bar\tau_{\al}=0,\  \alpha=1,2,\quad
 &{\rm on}\  S_1^T,\\
 &v\cdot\bar n=d\quad &{\rm
on}\  S_2^T ,\\
 & \bar n\cdot\D(v)\cdot\bar\tau_\alpha =0,\ \
\alpha=1,2,\quad
 &{\rm on}\ S_2^T,\\
&v\big|_{t=0}=v(0)\quad &{\rm in}\  \Om, \eal \een

and

\ben \label{h-q-lambda} \bal h_{,t}-\Div \T(h,q)& = -\la
(\v\cdot\nb \h+\h\cdot
 \nb \v) +g\quad &{\rm in} \ \Om^T,\\
\Div h &= 0\quad &{\rm in} \ \Om^T,\\ n\cdot\bar h &= 0,\ \ \bar
n\cdot\D(h)\cdot\bar\tau_\alpha + \ga h \cdot\bar\tau_\alpha =0,\
\  \alpha=1,2\quad &{\rm on} \ S_1^T,\\
h_i &= -d_{x_i},\ \ i=1,2,\ \ h_{3,x_3}= \De' d \quad &{\rm on} \ S_2^T,\\
h\big|_{t=0}&= h(0)\quad &{\rm in} \ \Om.\eal \een where $g=
f_{,x_3}, \De' = \pa_{x_1}^2 + \pa_{x_2}^2,$
 $\lambda\in[0,1]$ and $\v$, $\h$ are treated as
given functions. We assume that $\h=\v_{,x_3}$, thus
differentiating (\ref{NS-lambda}) with respect to $x_3$ and
subtracting from (\ref{h-q-lambda}) we obtain that
$$
h=v_{,x_3}.
$$
Problems (\ref{NS-lambda}), (\ref{h-q-lambda}) define the
mappings%
 \benn &\Phi_1:\ (\v,\la)\to (v,p),\\ &\Phi_2:\
(\v,\h,\la)\to(h,q). \eenn We set $\Phi=(\Phi_1,\Phi_2)$. In
previous Section we have shown a-priori estimate for a~fixed point
of $\Phi$ for $\la=1$. On the other hand, for $\lambda=0$ we have
a~unique existence of solutions to problems (\ref{NS-lambda}) and
(\ref{h-q-lambda}).

Let us introduce the space \benn {\mathcal
M}(\Om^{T})=L_{2r}(0,(T;W_{6\eta\over3+\eta}^2(\Om)), \quad
\eta\ge2,\ \ r\ge2. \eenn We shall find restrictions on $r,\eta$
such that \benn \Phi:\ {\mathcal M}(\Om^{(T})\times{\mathcal
M}(\Om^{T})\to {\mathcal M}(\Om^{T})\times{\mathcal M}(\Om^{T})
\eenn is a~compact mapping.

\noindent Assume that $\v\in
L_{2r}(0,T;W_{\frac{6\eta}{3+\eta}}^1(\Om))$. Then \ben \bal
\label{v-nbv} &\|\v\cdot\nb\v\|_{L_r(0,T;L_\eta(\Om))}=
\bigg(\int_{0}^{T}dt
\|\v\cdot\nb\v\|_{L_\eta(\Om)}^r\bigg)^{1/r}\cr
&\le\bigg(\int_{0}^{T}dt\|\v\|_{L_{\frac{6\eta}{3-\eta}}(\Om)}^r
\|\nb\v\|_{L_{\frac{6\eta}{3+\eta}}(\Om)}^r\bigg)^{1/r}\\
&\le c\bigg(\intop_{0}^{T}dt
\|\v\|_{W_{\frac{6\eta}{3+\eta}}^1(\Om)}^{2r}\bigg)^{1/r}\le c
\|\v\|_{L_{2r}(0,T;W_{\frac{6\eta}{3+\eta}}^1(\Om))}^2 \eal \een
In the same way we obtain \ben \bal \label{v-nbh}
&\|\v\cdot\nb\h\|_{L_r(0,T;L_\eta(\Om))}+
\|\h\cdot\nb\v\|_{L_r(0,T;L_\eta(\Om))}\\
&\le c\|\v\|_{L_{2r}(0,T;W_{6\eta\over3+\eta}^1(\Om))}
\|\h\|_{L_{2r}(0,T;W_{\frac{6\eta}{3+\eta}}^1(\Om))}. \eal \een In
view of (\ref{v-nbv}) and (\ref{v-nbh}) can be shown that
solutions to problems (\ref{NS-lambda}) and (\ref{h-q-lambda})
belong to $W_{\eta,r}^{2,1}(\Om^{T})$ (see \cite{S1}).

\noindent We are going to use the following imbeddings \ben
\label{imb1} W_2^{2,1}(\Om^{T})\supset W_{\eta,r}^{2,1}(\Om^{T})
\een and \ben \label{imb2} W_2^{2,1}(\Om^{T})\subset
L_{2r}(0,T;W_{\frac{6\eta}{3+\eta}}^1(\Om)) \equiv{\mathcal
M}(\Om^{T}),  \een where (\ref{imb1}) holds for $\eta\ge2$,
$r\ge2$ and (\ref{imb2}) is compact for $r,\eta$ satisfying the
inequality \benn
\frac{5}{2}-\frac{3}{\frac{6\eta}{3+\eta}}-\frac{2}{2r}<1 \eenn
which takes the form \ben \label{imb3} 1<
\frac{3}{2\eta}+\frac{1}{r}.  \een Setting $r=\eta=2$ we obtain
that $\v,\h\in L_4(0,T;W_{12/5}^1(\Om))$ and then condition
(\ref{imb3}) takes the form \ben \label{imb}  1< \frac{3}{4} +
\frac{1}{2} \quad {\rm so}\quad \frac{1}{2} < \frac{3}{4}.  \een
Hence, we have compactness of mappings $\Phi_1$ and $\Phi_2$.

\noindent To show continuity of mappings $\Phi_1$ and $\Phi_2$ we
consider \benn &
v_{st}-\Div\T(v_s,p_s)=-\la\v_s\cdot\nb\v_s+f \quad &{\rm in}\ \ \Om^{T},\\
&\Div v_s=0 \quad &{\rm in}\ \ \Om^{T},\\ &\ \bar n\cdot v_s=0,\ \
\nu\bar n\cdot\D(v_s)\cdot\bar\tau_\al+\ga
v_s\cdot\bar\tau_\al=0\quad &{\rm on}\ \ S_1^{T},\\ &v_s\cdot\bar
n=d\quad &{\rm
on}\  S_2^T ,\\
 & \bar n\cdot\D(v_s)\cdot\bar\tau_\alpha =0,\ \
\alpha=1,2,\quad
 &{\rm on}\ \ S_2^T,\\ & \
v_s|_{t=0}=v(0)\quad &{\rm in}\ \ \Om  \eenn and  \benn
 &h_{st}-\Div\T(h_s,q_s)=-\la(\h_s\cdot\nb\v_s+
 \v_s\cdot\nb\h_s)+g \quad &{\rm in}\ \ \Om^{T} \\
 &\Div h_s=0 \quad &{\rm in}\ \ \Om^{T}\\
&\bar n\cdot h_s=0,\ \ \nu\bar n\cdot\D(h_s)\cdot\bar\tau_\al+\ga
h_s\cdot\bar\tau_\al=0,\ \ \al=1,2\quad &{\rm on}\ \ S_1^{T},\\
&h_{si} = -d_{x_i},\ \ i=1,2,\ \ h_{s3,x_3}= \De' d \quad &{\rm on} \ S_2^T,\\
&h_s|_{t=0}=h(0) \quad &{\rm in}\ \ \Om, \eenn

where $s=1,2$.

\noindent Let
$$
V=v_1-v_2,\quad H=h_1-h_2,\quad P=p_1-p_2,\quad Q=q_1-q_2
$$
Then $V$ and $H$ are solutions to the problems \ben \bal \label{V}
&V_t-\Div\T(V,P)=-\la(\tilde V\cdot\nb\v_1+\v_2\cdot \nb\tilde
V)\\ &\Div V=0\\ &V\cdot\bar n|_{S_1}=0,\ \ \nu\bar
n\cdot\D(V)\cdot\bar\tau_\al+\ga V\cdot\bar\tau_\al|_{S_1}=0,\ \
\al=1,2,\\ &V\cdot\bar n|_{S_2}=d\ \  ,
  \bar n\cdot\D(V)\cdot\bar\tau_\alpha|_{S_2} =0,\ \
\alpha=1,2,\\ &V|_{t=0}=0, \eal \een %
\ben \bal & H_t-\Div\T(H,Q)=-\la(\tilde H\cdot\nb\v_1+\h_2\cdot
\nb\tilde V+\tilde V\cdot\nb\h_1+\v_2\cdot\nb\tilde H)\\
&\Div H=0\\ &H\cdot\bar n|_{S_1}=0,\ \ \nu\bar
n\cdot\D(H)\cdot\bar\tau_\al+\ga H\cdot\bar\tau_\al|_{S_1}=0, \ \
\al=1,2,\\ &
H_i|_{S_2}=-d_{x_i},\ \ i=1,2,\ \ H_{3,x_3}|_{S_2}=\De' d,\\
&H|_{t=0}=0.\eal \een

Assume that $\la\not=0$. In Lemma~\ref{A-lemma}, we proved the
existence of such a constant $\mathcal A$ that for sufficiently
chosen data,
$$ \|h\|_{W^{2,1}_2(\Om^T)} \le \mathcal A \quad {\rm and}
\quad \|v\|_{W^{2,1}_2(\Om^T)} \le \va(\mathcal A). $$ Then for
solutions of (\ref{V}) we have \ben \label{V-est} \bal
&\|V\|_{{\mathcal M}(\Om^{T})}=
\|V\|_{L_{2r}(0,T;W_{\frac{6\eta}{3+\eta}}^1(\Om))}\le c
\|V\|_{W_2^{2,1}(\Om^{T})}\le c\|V\|_{W_{\eta,r}^{2,1}(\Om^{T})}
\\ & \le c\sum_{s=1}^2
\|\v_s\|_{L_{2r}(0;T;W_{\frac{6\eta}{3+\eta}}^1(\Om))}\cdot
\|\tilde V\|_{L_{2r}(0;T;W_{\frac{6\eta}{3+\eta}}^1(\Om))} \le
c(\mathcal A)\|\tilde V\|_{{\mathcal M}(\Om^{T})},\eal \een where
$r\ge2$, $\eta\ge2$ and satisfy either (\ref{imb3}) or
(\ref{imb}).

\noindent Similarly we can prove that \ben \label{H-est}
\|H\|_{{\mathcal M}(\Om^{T})}\le c(\mathcal A) (\|\tilde
V\|_{{\mathcal M}(\Om^{T})}+ \|\tilde H\|_{{\mathcal M}(\Om^{T})})
\een

\noindent Inequalities (\ref{V-est}) and (\ref{H-est}) imply
continuity of mapping $\Phi$. Continuity with respect to $\la$ is
obvious. Hence by the Leray-Schauder fixed point theorem we have
the existence of solutions to problem (\ref{NS}) such that $v\in
W_2^{2,1}(\Om^{T})$, $\nb p\in L_2(\Om^{T})$ and estimates
(\ref{main}) hold.
\end{proof}

Lemma~4.1 and estimates (\ref{h-A}) from Lemma~\ref{A-lemma} imply Theorem 1.

\end{document}